\numberwithin{equation}{section}
\theoremstyle{plain}
\newtheorem{theorem}{Theorem}[section]
\newtheorem{remark}[theorem]{Remark}
\newtheorem{lemma}[theorem]{Lemma}
\newtheorem{proposition}[theorem]{Proposition}
\newtheorem{definition}[theorem]{Definition}
\newtheorem{corollary}[theorem]{Corollary}
\numberwithin{equation}{section}
\begin{document}

\title[Non-cutoff Boltzmann equation]
{The Cauchy problem for radially symmetric homogeneous Boltzmann
equation\\
with Shubin class initial datum\\
and Gelfand-Shilov smoothing effect}

\author[Hao-Guang Li \& Chao-Jiang Xu]
{Hao-Guang Li and Chao-Jiang Xu}

\address{Hao-Guang Li,
\newline\indent
School of Mathematics and Statistics, South-Central University for Nationalities
\newline\indent
430074, Wuhan, P. R. China}
\email{lihaoguang@mail.scuec.edu.cn}

\address{Chao-Jiang Xu,
\newline\indent
Universit\'e de Rouen, CNRS UMR 6085, Laboratoire de Math\'ematiques
\newline\indent
76801 Saint-Etienne du Rouvray, France
\newline\indent
School of Mathematics and statistics, Wuhan University 430072,
Wuhan, P. R. China
}
\email{chao-jiang.xu@univ-rouen.fr}

\date{\today}

\subjclass[2010]{35Q20, 35E15, 35B65}

\keywords{Cauchy problem, Boltzmann equation,  Gelfand-Shilov smoothing effect, Shubin class  initial datum}

\begin{abstract}
In this paper, we study the Cauchy problem for radially symmetric homogeneous non-cutoff Boltzmann equation with Maxwellian molecules, the initial datum belongs to Shubin space of the negative index which can be characterized by spectral decomposition of the harmonic oscillators. The Shubin space of the negative index contains the measure functions.  Based on this spectral decomposition, we construct the weak solution with Shubin class initial datum, we also prove that the Cauchy problem enjoys Gelfand-Shilov smoothing effect, meaning that the smoothing properties are the same as the Cauchy problem defined by the evolution equation associated to a fractional harmonic oscillator.
\end{abstract}

\maketitle
\tableofcontents


\section{Introduction}\label{S1}
In this work, we consider the spatially homogeneous Boltzmann equation
\begin{equation}\label{eq1.10}
\left\{
\begin{array}{ll}
   \partial_t f= Q(f,f),\\
  f|_{t=0}=f_0\geq0,
\end{array}
\right.
\end{equation}
where $f=f(t,v)$ is the density distribution function depending on the variables
$v\in\mathbb{R}^{3}$ and the time $t\geq0$. The Boltzmann bilinear collision operator is given by
\begin{equation*}
Q(g,f)(v)
=\int_{\mathbb{R}^{3}}
\int_{\mathbb{S}^{2}}
B(v-v_{\ast},\sigma)
(g(v_{\ast}^{\prime})f(v^{\prime})-g(v_{\ast})f(v))
dv_{\ast}d\sigma,
\end{equation*}
where for $\sigma\in \mathbb{S}^{2}$,~the symbols~$v_{\ast}^{\prime}$~and~$v^{\prime}$~are abbreviations for the expressions,
$$
v^{\prime}=\frac{v+v_{\ast}}{2}+\frac{|v-v_{\ast}|}{2}\sigma,\,\,\,\,\, v_{\ast}^{\prime}
=\frac{v+v_{\ast}}{2}-\frac{|v-v_{\ast}|}{2}\sigma,
$$
which are obtained in such a way that collision preserves momentum and kinetic energy,~namely
$$
v_{\ast}^{\prime}+v^{\prime}=v+v_{\ast},\,\,\,\,\, |v_{\ast}^{\prime}|^{2}+|v^{\prime}|^{2}=|v|^{2}+|v_{\ast}|^{2}.
$$
The non-negative cross section $B(z,\sigma)$~depends only on $|z|$~and the scalar product $\frac{z}{|z|}\cdot\sigma$.~For physical models,~it usually takes the form
$$
B(v-v_{\ast},\sigma)=\Phi(|v-v_{\ast}|)b(\cos\theta),~~~~
\cos\theta=\frac{v-v_{\ast}}{|v-v_{\ast}|}\cdot\sigma,~~ 0\leq\theta\leq\frac{\pi}{2}.
$$
Throughout this paper, we consider the Maxwellian molecules case which corresponds to the case $\Phi\equiv 1$ and focus our attention on the following general assumption of $b$
\begin{equation}\label{beta}
\beta(\theta)=
2\pi b(\cos2\theta) \sin2\theta \approx \theta^{-1-2s},\,\,\mbox{when}~\theta\rightarrow0^{+},
\end{equation}
for some $0<s<1$. Without loss of generality, we may assume that $b(\cos\theta)$ is supported on the set $\cos\theta\geq0$.
See for instance \cite{NYKC1} for more details on $\beta(\,\cdot\,)$ and \cite{Villani} for a general collision kernel.

We introduce the fluctuation of density distribution function
$$
f(t,v)=\mu(v)+\sqrt{\mu}(v)g(t,v)
$$
near the absolute Maxwellian distribution
$$
\mu(v)=(2\pi)^{-\frac 32}e^{-\frac{|v|^{2}}{2}}.
$$
Then the Cauchy problem \eqref{eq1.10} is reduced to the Cauchy problem for the fluctuation
\begin{equation} \label{eq-1}
\left\{ \begin{aligned}
         &\partial_t g+\mathcal{L}(g)={\bf \Gamma}(g, g),\quad t>0, v\in\mathbb{R}^3,\\
         &g|_{t=0}=g^{0},\quad v\in\mathbb{R}^3.
\end{aligned} \right.
\end{equation}
with $g^0(v)=\mu^{-\frac 12}f_0(v) -\sqrt{\mu}(v)$, where
$$
{\bf \Gamma}(g, g)=\mu^{-\frac 12}Q(\sqrt{\mu}g,\sqrt{\mu}g),\quad
\mathcal{L}(g)=-\mu^{-\frac 12}\Big(Q(\sqrt{\mu}g,\mu)+Q(\mu,\sqrt{\mu}g)\Big).
$$
The linear operator $\mathcal{L}$ is nonnegative (\cite{NYKC1,NYKC2,NYKC3})\,with the null space
$$
\mathcal{N}=\text{span}\left\{\sqrt{\mu},\,\sqrt{\mu}v_1,\,\sqrt{\mu}v_2,\,
\sqrt{\mu}v_3,\,\sqrt{\mu}|v|^2\right\}.
$$
It is well known that the angular singularity in the cross section leads to the regularity of the solution, see\cite{MU,YSCT,Villani,TZ} and the references therein.  We can also refer to \cite{G-N,L-X} for smoothing effect of the radially symmetric spatially
homogeneous Boltzmann equation.
Regarding the linearized Cauchy problem \eqref{eq-1}, the global in time smoothing effect of the solution to the Cauchy problem \eqref{eq-1} has been shown, in \cite{NYKC3} for radially symmetric case ,and in \cite{GLX} for general case with the initial data in $L^2(\mathbb{R}^3)$.   It proved that the solutions of the Cauchy problem \eqref{eq-1}
belong to the symmetric Gelfand-Shilov  space $S^{\frac{1}{2s}}_{\frac{1}{2s}}(\mathbb{R}^3)$ for any positive time.  Moreover, there exist positive constants $c>0$ and $C>0$, such that
$$
 \forall\, t>0,\quad \|e^{ct \mathcal{H}^s}g(t)\|_{L^2}\leq\,C\|g_0\|_{L^2},
$$
where $\mathcal{H}$ is the harmonic oscilator
\begin{equation*}
  \mathcal{H}=-\triangle_v +\frac{|v|^2}{4}.
\end{equation*}
The Gelfand-Shilov space $S^{\mu}_{\nu}(\mathbb{R}^3)$, with $\mu,\,\nu>0,$\,$\mu+\nu\geq1,$\, is the subspace of smooth functions satisfying:
$$
\exists\, A>0,\, C>0,\,
\sup_{v\in\mathbb{R}^3}|v^{\beta}\partial^{\alpha}_vf(v)|\leq\,CA^{|\alpha|+
|\beta|}(\alpha!)^{\mu}(\beta!)^{\nu},\,\,\forall\,\alpha,\,\beta\in\mathbb{N}^3.
$$
So that Gelfand-Shilov class $S^{\mu}_{\nu}(\mathbb{R}^3)$ is Gevery class  $G^{\mu}(\mathbb{R}^3)$ with rapid decay at infinite. This Gelfand-Shilov space can be characterized as the subspace of Schwartz functions $f\in\,\mathscr{S}(\mathbb{R}^3)$ such that,
$$
\exists\, C>0,\,\epsilon>0,\,|f(v)|\leq Ce^{-\epsilon|v|^{\frac{1}{\nu}}},\,\,v\in\mathbb{R}^3\,\,\text{and}\,\,|\hat{f}(\xi)|\leq Ce^{-\epsilon|\xi|^{\frac{1}{\mu}}},\,\,\xi\in\mathbb{R}^3.
$$
The symmetric Gelfand-Shilov space $S^{\nu}_{\nu}(\mathbb{R}^3)$ with $\nu\geq\frac{1}{2}$ can  also be identified with
$$
S^{\nu}_{\nu}(\mathbb{R}^{3})=\left\{f\in C^\infty (\mathbb{R}^3);  \exists \tau>0,
\|e^{\tau \, \mathcal{H}^{\frac{1}{2\nu}}}f\|_{L^2}<+\infty\right\}.
$$
See Appendix \ref{appendix} for more properties of Gelfand-Shilov spaces.

In this paper, we will show that the solutions to the Cauchy problem \eqref{eq-1} belong to the Gelfand-Shlov space $S^{\frac{1}{2s}}_{\frac{1}{2s}}(\mathbb{R}^3)$ for any positive time, but with the initial datum in  the Shubin space $Q^{-\frac{3}{2}-\alpha}(\mathbb{R}^3)$ for $0<\alpha<2s$.
This space contains the Sobolev space $H^{-\frac{3}{2}-\alpha}(\mathbb{R}^3)$,
so it is  more singular than the measure valued initial datum. For $\beta\in \mathbb{R}$,
Shubin  introduce the following function spaces, (see \cite{Shubin}, Ch. IV, 25.3)
\[
Q^{\beta}(\mathbb{R}^3)=\Big\{u\in \mathcal{S}'(\mathbb{R}^3);\,\,   \|u\|_{Q^{\beta}(\mathbb{R}^3)} =\bigl\|\bigl(\mathcal{H} + 1 \bigr)^{\frac{\beta}{2}} \, u\bigr\|_{L^2(\mathbb{R}^3)}
<+\infty\Big\}.
\]
We denote  by $Q^{\beta}_r(\mathbb{R}^3)$  the radial symmetric functions belongs to  $Q^{\beta}(\mathbb{R}^3)$.

The main theorem of this paper is given in the following.
\begin{theorem}\label{trick}
For any $\alpha$ satisfying $0<\alpha<2s$ with $s$ given in $\eqref{beta}$,
there exists $\varepsilon_0>0$ such that for any initial datum
$g^0\in\, Q^{-\frac{3}{2}-\alpha}_r(\mathbb{R}^3)\cap \mathcal{N}^{\perp}$
with $\|g^0\|^2_{Q^{-\frac{3}{2}-\alpha}(\mathbb{R}^3)}\le \varepsilon_0$,
the Cauchy problem \eqref{eq-1} admits a global radial symmetric weak solution
$$
g\in L^{+\infty}([0, +\infty[; \, Q^{-\frac{3}{2}-\alpha}_r(\mathbb{R}^3) ).
$$
Moreover, there exists  $c_0>0$, $C>0$, such that, for any $t\ge 0$,
$$
\|e^{{c_0t} \, \mathcal{H}^s}\mathcal{H}^{-\frac{3}{4}-\frac{\alpha}{2}}g(t)\|_{L^2(\mathbb{R}^3)}
\leq\,
Ce^{-\frac{\lambda_{2}}{4} t} \, \|g_0\|_{Q^{-\frac{3}{2}-\alpha}(\mathbb{R}^3)},
$$
where
$$
\lambda_{2}=\int^{\frac{\pi}{4}}_{-\frac{\pi}{4}}\beta( \theta)(1-\sin^4\theta-\cos^4\theta)d\theta>0.
$$
This implies that $ g(t)\in S^{\frac{1}{2s}}_{\frac{1}{2s}}(\mathbb{R}^3)$ for any $t>0$.
\end{theorem}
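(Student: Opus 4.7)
Since we work with radial functions, the plan is to diagonalize everything against the basis $\{\varphi_n\}_{n\ge 0}$ of radial eigenfunctions of the harmonic oscillator $\mathcal{H}$ (built from Laguerre functions) with eigenvalues $\lambda_n=\frac{3}{2}+2n$. Expand
$$
g(t,v)=\sum_{n\ge 2}g_n(t)\,\varphi_n(v),
$$
starting from $n=2$ because $g^0\in\mathcal{N}^{\perp}$ and this orthogonality is preserved by the flow (since $\mathcal{N}\subset\ker\mathcal{L}$ and $\Gamma(g,g)\perp\mathcal{N}$ by conservation of mass, momentum and energy). On this basis the linearized operator $\mathcal{L}$ acts diagonally, and Maxwellian Boltzmann eigenvalues satisfy $\mu_n\approx n^s$ with $\mu_n\ge\lambda_2/2$ for $n\ge 2$. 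The Shubin norm becomes $\|g\|_{Q^{-3/2-\alpha}}^2\simeq\sum_n(1+\lambda_n)^{-3/2-\alpha}|g_n|^2$, and the target weighted norm becomes $\sum_n\lambda_n^{-3/2-\alpha}e^{2c_0t\lambda_n^s}|g_n(t)|^2$.

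\textbf{Algebraic reduction.} Next, I would invoke the triple-product formula for the Maxwellian collision operator in the radial setting (as in the Glangetas--Li--Xu / Lerner--Morimoto--Pravda-Starov--Xu program): for radial $g,h$,
$$
\bigl(\Gamma(g,h),\varphi_n\bigr)_{L^2}=\sum_{k+\ell=n}\mu_{k,\ell}\,g_k\,h_\ell,
$$
where the structure constants $\mu_{k,\ell}$ are given by an explicit angular integral against $\beta(\theta)$ and are uniformly bounded by $\lambda_2$. This converts the Cauchy problem \eqref{eq-1} into an infinite triangular system of ODEs
$$
g_n'(t)+\mu_n g_n(t)=\sum_{k+\ell=n,\ k,\ell\ge 2}\mu_{k,\ell}\,g_k(t)\,g_\ell(t),\qquad g_n(0)=g^0_n,
$$
which is the engine driving all subsequent estimates.

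\textbf{A priori estimate and fixed-point construction.} The core step is the energy estimate for
$$
\Phi(t)=\sum_{n\ge 2}\lambda_n^{-3/2-\alpha}\,e^{2c_0 t\lambda_n^{s}}\,|g_n(t)|^2.
$$
Differentiating in $t$, the linear part contributes the dissipation $-2(\mu_n-c_0\lambda_n^{s})$; choosing $c_0$ sufficiently small keeps $\mu_n-c_0\lambda_n^{s}\ge\lambda_2/4$, which yields the exponential decay factor $e^{-\lambda_2 t/2}$ in the statement. For the nonlinear term one uses Cauchy--Schwarz in the convolution $k+\ell=n$ together with the subadditivity $\lambda_n^{s}\le\lambda_k^{s}+\lambda_\ell^{s}$ (which is where the Gelfand--Shilov exponent $s$ and the symmetric distribution of the exponential weight are essential) to bound
$$
\Bigl|\sum_{n}\lambda_n^{-3/2-\alpha}e^{2c_0 t\lambda_n^{s}}\bar g_n\!\!\sum_{k+\ell=n}\!\mu_{k,\ell}g_k g_\ell\Bigr|\le C\,\Phi(t)^{3/2},
$$
where the exponent $-3/2-\alpha$ (together with $\alpha<2s$) makes the weighted sequence $(\lambda_n^{-3/2-\alpha}e^{\cdot\cdot})$ absolutely summable after absorbing one factor into the dissipation. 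The resulting differential inequality
$\Phi'(t)+\tfrac{\lambda_2}{2}\Phi(t)\le C\,\Phi(t)^{3/2}$
gives, under the smallness assumption $\Phi(0)\le\varepsilon_0$ with $\varepsilon_0$ small enough, a global bound $\Phi(t)\le C\,e^{-\lambda_2 t/2}\Phi(0)$ by a standard continuity argument. The global weak solution is then produced by truncating the sum to $n\le N$ (a finite-dimensional ODE system, solvable by Cauchy--Lipschitz), applying the same a priori bound uniformly in $N$, and extracting a weak-$\ast$ limit in $L^\infty_t Q^{-3/2-\alpha}_r$; the triangular algebraic structure of the nonlinearity makes the passage to the limit for each fixed coefficient $g_n$ routine.

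\textbf{Main obstacle.} The delicate point is the nonlinear estimate: we have very rough data (Shubin index $-3/2-\alpha$, below measure-valued) yet want to propagate the exponential Gelfand--Shilov weight $e^{c_0 t\mathcal{H}^s}$. Everything hinges on the interplay between the negative-power weight $\lambda_n^{-3/2-\alpha}$ (which must produce a summable factor to close a trilinear estimate with only Cauchy--Schwarz, not Young), the eigenvalue gap $\mu_n\ge c\lambda_n^{s}$ from the non-cutoff dissipation, and the subadditivity of $\lambda_n^{s}$ across the convolution $k+\ell=n$. The constraint $\alpha<2s$ is precisely what makes these three ingredients compatible; the smallness $\|g^0\|_{Q^{-3/2-\alpha}}\le\varepsilon_0$ then closes the bootstrap. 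Once the weighted estimate is proved, the Gelfand--Shilov conclusion $g(t)\in S^{1/(2s)}_{1/(2s)}$ for $t>0$ follows from the characterization via $\|e^{\tau\mathcal{H}^{s}}\cdot\|_{L^2}<\infty$ recalled in the introduction.
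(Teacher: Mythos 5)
Your overall route is the same as the paper's: diagonalize on the radial Laguerre/Hermite basis, use the triangular structure ${\bf\Gamma}(\varphi_k,\varphi_l)=\mu_{k,l}\varphi_{k+l}$ to reduce to an infinite triangular ODE system, run an exponentially weighted energy estimate using the subadditivity of $(2n+\tfrac32)^s$, and pass to the limit from the truncated system. However, there is a genuine gap at the heart of your nonlinear estimate. First, the claim that the structure constants $\mu_{k,\ell}$ are ``uniformly bounded by $\lambda_2$'' is false: only the angular integral $\int_0^{\pi/4}\beta(\theta)\sin^{2k}\theta\cos^{2\ell}\theta\,d\theta$ is bounded, while $\mu_{k,\ell}$ also carries the factor $\sqrt{(2k+2\ell+1)!/\bigl((2k+1)!(2\ell+1)!\bigr)}$, which is huge; the paper's Lemma \ref{sum} shows, via Stirling and the Beta-function computation, that the net size is $\mu_{k,\ell}\lesssim (k+\ell)^{\frac14+s}\,\ell^{-\frac14}k^{-\frac54-s}$, so e.g. $\mu_{2,\ell}\sim \ell^{s}$ grows. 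The sharp bound on $\mu_{k,\ell}$ and the weighted convolution sums \eqref{estimatesum}--\eqref{estimatesum-1} (this is where $-\tfrac32-\alpha$ and $\alpha<2s$ enter) are exactly the technical core you have skipped; without them your trilinear bound is unsupported.

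Second, because of this growth, your claimed closure $\Phi'(t)+\tfrac{\lambda_2}{2}\Phi(t)\le C\,\Phi(t)^{3/2}$ cannot hold: the term with $k=2$, $\ell=n-2$ contributes roughly $\sum_n n^{s}\tilde g_n\tilde g_{n-2}$ in the weighted variables, which is controlled only by the dissipation norm (the $\mathcal{H}^{s/2}$- or $\mathcal{L}^{1/2}$-weighted norm), not by $\Phi$ itself. The correct structure, as in Proposition \ref{estimatetri}, is that the nonlinear term is bounded by $C\,\Phi^{1/2}$ times the square of the dissipation norm, so the estimate closes by absorbing it into the dissipation under the smallness $C\Phi^{1/2}\le\tfrac14$; the paper then avoids a continuity/bootstrap argument altogether by exploiting the triangularity (the trilinear bound only involves $\mathbb{S}_{N-2}g$ in the first slot) to run an induction on the Galerkin level $N$ (Proposition \ref{induction}). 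If you replace your $\Phi^{3/2}$ inequality by this dissipation-absorption argument, and supply the missing estimates on $\mu_{k,\ell}$, your sketch becomes essentially the paper's proof; as written, the key step would fail.
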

\begin{remark}
It is well known that  the single Dirac mass is the constant solution of the Cauchy problem \eqref{eq1.10}. The following example is some how surprise. Let
$$
f_0=\delta_{\mathbf{0}}-\left(\frac{3}{2}-\frac{|v|^2}{2}\right)\mu
$$
be the initial datum of Cauchy problem \eqref{eq1.10}, then
$f_0=\mu+\sqrt{\mu}g^0$ with
\begin{equation}\label{ex1}
g^0=\frac{1}{\sqrt{\mu}}\delta_{\mathbf{0}}-\sqrt{\mu}-
\left(\frac{3}{2}-\frac{|v|^2}{2}\right)\sqrt{\mu}.
\end{equation}
We will prove, in the Section \ref{S2},  that $g^0\in Q^{-\frac{3}{2}-\alpha}_r(\mathbb{R}^3)\cap \mathcal{N}^{\perp}$, so Theorem \ref{trick} imply that the Cauchy problem \eqref{eq1.10} admit a global solution
$$
f=\mu+\sqrt{\mu}g\in  L^{+\infty}([0, +\infty[; \, Q^{-\frac{3}{2}-\alpha}_r(\mathbb{R}^3) ) \cap
C^{0}(]0, +\infty[; \, S^{\frac{1}{2s}}_{\frac{1}{2s}}(\mathbb{R}^3) ).
$$
\end{remark}

\begin{remark} It is also surprise that the nonlinear operators is well-defined for 
the Shubin space, in fact, for any $0<\alpha<2s<2$,  and $f, g \in Q^{-\frac{3}{2}-\alpha}(\mathbb{R}^3)$, we prove 
\begin{equation}\label{tri}
\Gamma(f,\, g)\in Q^{-2s-\frac{3}{2}-\alpha-\gamma}(\mathbb{R}^3)	
\end{equation}
for any $\gamma>1$ (see Corollary \ref{cor1}).
\end{remark}

In the study of the Cauchy problem of the homogeneous Boltzmann equation in Maxwellian molecules case,
Tanaka in \cite{Tanaka} proved the existence and the uniqueness of the solution under the assumption of the initial data  $f_0>0$,
\begin{equation}\label{finite-energy}
\int_{\mathbb{R}^3}f_0(v)dv=1, \quad \int_{\mathbb{R}^3}v_jf_0(v)dv=0,\,j=1,2,3, \quad\int_{\mathbb{R}^3}|v|^2f_0(v)dv=3.
\end{equation}
The proof of this result was simplified and generalized in \cite{Villani1998} and \cite{TV}.
Cannone-Karch in \cite{Cannone-Karch} extended this result for the initial data of the probability measure without \eqref{finite-energy}, this means that the initial data could have infinite energy.  Morimoto \cite{Morimoto2} and Morimoto-Yang \cite{MY} extended this result more profoundly and prove the smoothing effect of the solution to the Cauchy problem \eqref{eq1.10} with the measure initial data which is not contain in $P_{\alpha}(\mathbb{R}^3)$, where $P_{\alpha}(\mathbb{R}^3)$, $0\leq\alpha\leq2$ is the probability measures $F$ on $\mathbb{R}^3$ such that
$$
\int_{\mathbb{R}^3}|v|^{\alpha}dF(v)<\infty,
$$
and moreover when $\alpha\geq1,$ it requires that
$$
\int_{\mathbb{R}^3}v_jdF(v)=0,\,\,j=1,2,3.
$$
Recently, Morimoto, Wang and Yang in \cite{Morimoto-Wang-Yang} introduce a new classification on the characteristic functions  and prove the smoothing effect under this measure initial datum, see also \cite{Morimoto-Wang-Yang2} and \cite{Cho-Morimoto-Wang-Yang}. For the Shubin space, we have

\begin{remark}
Let $0<\beta<\alpha<2$, then for any $g^0\in \,Q^{-\frac{3}{2}-\beta}_r(\mathbb{R}^3)$  with $\langle g^0, \sqrt{\mu}\rangle=0$,  we have
\begin{equation}\label{ex2}
f_0=\mu+\sqrt{\mu}\, g^0 \in  P_{\alpha}(\mathbb{R}^3),
\end{equation}
So that for a class of measure initial datum, we prove the Gelfand-Shilov smoothing effect of the Cauchy problem \eqref{eq1.10}.
\end{remark}

The rest of the paper is arranged as follows:
In Section \ref{S2}, we introduce the spectral analysis
of the linear and nonlinear Boltzmann operators,
and present the explicit solution of the Cauchy problem \eqref{eq-1} by
transforming the linearized Boltzmann equation into an infinite system of ordinary differential
equations which can be solved explicitly.  Furthermore, we prove \eqref{ex2} and interpret the Example \eqref{ex1}.
In Section \ref{S3}, we establish an upper bounded estimates
of the nonlinear operators with an exponential weighted norm.
The proof of the main Theorem \ref{trick} will be presented in Section \ref{S4}-\ref{S5}.
In the Appendix \ref{appendix}, we present some identity properties
of the Gelfand-Shilov spaces and the Shubin spaces used in this paper.

\section{Preliminary }\label{S2}

\noindent{\bf Diagonalization of  the linear operators.} We first recall the spectral decomposition of {\color{black} the} linear Boltzmann operator.
Let
$$
\varphi_{n}(v)=\sqrt{\frac{n!}{4\sqrt{2}\pi\Gamma(n+\frac{3}{2})}}L^{(\frac{1}{2})}_n\left(\frac{|v|^{2}}{2}\right)e^{-\frac{|v|^2}{4}},
$$
where $\Gamma(\,\cdot\,)$ is the standard Gamma function, for any $x>0$,
$$
\Gamma(x)=\int^{+\infty}_0t^{x-1}e^{-x}dx,
$$
and the Laguerre polynomial $L^{(\alpha)}_{n}$~of order $\alpha$,~degree $n$ read,
$$
L^{(\alpha)}_{n}(x)=\sum^{n}_{r=0}(-1)^{n-r}\frac{\Gamma(\alpha+n+1)}{r!(n-r)!
\Gamma(\alpha+n-r+1)}x^{n-r}.
$$
Then  $\left\{\varphi_{n}\right\}$ constitute an orthonormal basis of $L^2_{rad}(\mathbb{R}^3)$, the radially symmetric function space (see \cite{NYKC3}). In particular,
\begin{align*}
\varphi_{0}(v)&=(2\pi)^{-\frac{3}{4}}e^{-\frac{|v|^2}{4}}=\sqrt{\mu},\\
\varphi_{1}(v)&=\sqrt{\frac{2}{3}}\left(\frac{3}{2}-\frac{|v|^2}{2}\right)(2\pi)^{-\frac{3}{4}}e^{-\frac{|v|^2}{4}}
=\sqrt{\frac{2}{3}}\left(\frac{3}{2}-\frac{|v|^2}{2}\right)\sqrt{\mu}.
\end{align*}
Furthermore, we have, for suitable radial symmetric function $g$,
\begin{equation*}
\mathcal{H}(g)=\sum^{\infty}_{n=0}(2n+\frac 32)\, g_n\, \varphi_{n}\, ,
\end{equation*}
where $g_n=\langle g,\, \varphi_{n}\rangle$ and
\begin{equation*}
\mathcal{L}(g)=\sum^{\infty}_{n=0}
\lambda_{n}\, g_n\, \varphi_{n}\,
\end{equation*}
with $\lambda_{0}=\lambda_{1}=0$ and for $n\geq2$,
$$
\lambda_{n}=\int^{\frac{\pi}{4}}_{-\frac{\pi}{4}}\beta(\theta)
(1-(\cos\theta)^{2n}-(\sin\theta)^{2n})d\theta >0 .
$$
Using this spectral decomposition, the definition of $\mathcal{H}^\alpha,\, e^{c\mathcal{H}^s},\, e^{c\mathcal{L}}$ is then classical.
More explicitly, we have
\begin{align*}
&\|\mathcal{L}^{\frac{1}{2}}g\|^2_{L^2}=\sum^{\infty}_{n=2}
\lambda_{n}|g_n|^2; \\
&\|\mathcal{H}^{-\frac{\alpha}{2}}g\|^2_{L^2}=\sum^{\infty}_{n=0}
(2n+\frac{3}{2})^{-\alpha}|g_n|^2.
\end{align*}

\smallskip
\noindent{\bf Triangular effect of the non linear operators.} We study now the algebra property of the nonlinear terms
$$
{\bf \Gamma}(\varphi_{n},
\varphi_{m}),
$$
By the same proof of Proposition 2.1 in \cite{GLX} or Lemma 3.3 in \cite{NYKC3}, we have the following triangular effect for the nonlinear Boltzmann operators on the basis $\{\varphi_{n}\}$.

\begin{proposition}\label{expansion}
The following algebraic identities hold,
\begin{align*}
&(i_1) \quad\,\, {\bf \Gamma}(\varphi_{0},\varphi_{m})=
\left(\int^{\frac{\pi}{4}}_0\beta(\theta)
((\cos\theta)^{2m}-1)d\theta\right)\varphi_{m},\,\, m\in\mathbb{N};\\
&(i_2) \quad\,\, {\bf \Gamma}(\varphi_{n},\varphi_{0})=\left(\int^{\frac{\pi}{4}}_0\beta(\theta)((\sin\theta)^{2n}-\delta_{0,n})d\theta\right)\varphi_{n},\,\,n\in\mathbb{N};\\
&(ii)  \quad\,\, {\bf \Gamma}(\varphi_{n},\varphi_{m})=\mu_{n,m}\varphi_{n+m}, \,\,\text{for}\,\, n\geq1, m\geq1,
\end{align*}
where
\begin{align}\label{mumn}
\mu_{n,m}=\sqrt{\frac{(2n+2m+1)!}{(2n+1)!(2m+1)!}}
\left(\int^{\frac{\pi}{4}}_0\beta(\theta)(\sin\theta)^{2n}(\cos\theta)^{2m}d\theta\right).
\end{align}
\end{proposition}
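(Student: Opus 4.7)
The plan is to pass to the Fourier side via Bobylev's identity and then reduce everything to a Laguerre polynomial identity, following the computation in \cite{GLX, NYKC3}. For Maxwellian molecules one has
\[
\widehat{Q(F,G)}(\xi) = \int_{S^2} b\!\left(\tfrac{\xi\cdot\sigma}{|\xi|}\right)\bigl[\widehat{F}(\xi^-)\widehat{G}(\xi^+) - \widehat{F}(0)\widehat{G}(\xi)\bigr]\, d\sigma,\qquad \xi^\pm=\tfrac{\xi\pm|\xi|\sigma}{2},
\]
so the $\mu^{-1/2}$ prefactor in $\Gamma$ is absorbed by the conservation law $|v|^2+|v_*|^2=|v'|^2+|v'_*|^2$, which cancels the Gaussian weight in the gain term. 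Since $\sqrt\mu\,\varphi_k$ is a Laguerre polynomial in $|v|^2/2$ multiplied by $e^{-|v|^2/2}$, it is a $3$-dimensional radial Hermite function and is, up to the sign $(-1)^k$, a fixed point of the Fourier transform; this lets me rewrite the entire gain term in terms of $\varphi_n, \varphi_m$ evaluated at rescaled radial arguments.

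Next, parametrize $\sigma$ by the polar angle $\phi=2\theta$ relative to $\xi/|\xi|$ (the doubling is the one built into \eqref{beta}); then $|\xi^+|^2 = |\xi|^2\cos^2\theta$ and $|\xi^-|^2=|\xi|^2\sin^2\theta$, and the gain integrand becomes, up to normalization constants,
\[
L_n^{(1/2)}\!\bigl(\tfrac12|\xi|^2\sin^2\theta\bigr)\,L_m^{(1/2)}\!\bigl(\tfrac12|\xi|^2\cos^2\theta\bigr)\,e^{-|\xi|^2/2}.
\]
The algebraic heart of case $(ii)$ is then the identity that at $a+b=1$ this Laguerre product collapses, modulo lower-order Laguerre components, to $(\sin\theta)^{2n}(\cos\theta)^{2m}\,L_{n+m}^{(1/2)}(|\xi|^2/2)\,e^{-|\xi|^2/2}$, with the lower-order components orthogonal to the projection onto $\varphi_{n+m}$. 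Reconciling the normalization constants of $\varphi_n, \varphi_m, \varphi_{n+m}$, equivalently the Gamma ratio $\Gamma(n+\tfrac32)\Gamma(m+\tfrac32)/\Gamma(n+m+\tfrac32)$, then produces the combinatorial factor $\sqrt{(2n+2m+1)!/[(2n+1)!(2m+1)!]}$ appearing in \eqref{mumn}.

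For $(i_1)$ and $(i_2)$ the computation is much simpler because $\varphi_0=\sqrt\mu$ is the Gaussian ground state, fixed by both $\mathcal F$ and $\mathcal H$: the gain term immediately reduces to a scalar multiple of $\varphi_m$, respectively $\varphi_n$, while the loss term contributes either the $-1$ of $(i_1)$ via $\widehat{\sqrt\mu}(0)=1$, or the Kronecker factor $\delta_{0,n}$ of $(i_2)$ via the orthogonality $\langle\varphi_n,\sqrt\mu\rangle=\delta_{0,n}$. In all three cases the restriction $\theta\in[0,\pi/4]$ comes from the symmetrization $\theta\leftrightarrow\pi/2-\theta$ permitted by the support assumption $\cos\theta\ge0$ on $b$.

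The main obstacle is the algebraic collapse of case $(ii)$: one must verify that at $a+b=1$ the Laguerre product yields exactly one surviving basis element after projecting onto $\{\varphi_k\}$, and that the explicit coefficient matches \eqref{mumn} once all the $3$D radial Gamma normalizations are unraveled. No new ideas beyond those already deployed in \cite{GLX, NYKC3} are required — the remainder is bookkeeping — but it is this collapse that produces the striking ``triangular'' structure $\Gamma(\varphi_n,\varphi_m)\propto\varphi_{n+m}$ on which the rest of the paper depends.
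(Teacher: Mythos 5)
There is a genuine gap, and it sits exactly where you flag ``the main obstacle.'' The paper itself does not reprove this proposition: it simply invokes the proof of Proposition 2.1 in \cite{GLX} (or Lemma 3.3 in \cite{NYKC3}), and that proof rests on Bobylev's identity combined with the exact formula
$\mathcal{F}(\sqrt{\mu}\,\varphi_{k})(\xi)=\frac{1}{\sqrt{(2k+1)!}}\,|\xi|^{2k}e^{-|\xi|^{2}/2}$,
which is precisely the identity this paper quotes later in Section \ref{S2} from Lemma 3.2 of \cite{NYKC3}. Your premise that $\sqrt{\mu}\,\varphi_{k}$ is, up to $(-1)^{k}$, a fixed point of the Fourier transform is false for these scalings: the Laguerre--Gaussian $L^{(1/2)}_{k}(|v|^{2}/2)e^{-|v|^{2}/2}$ is ``off-diagonally'' scaled, and its Fourier transform is a pure monomial times a Gaussian, not another Laguerre--Gaussian. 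This is not a cosmetic point — it is the whole mechanism. With the monomial form, the gain term in Bobylev's formula becomes
$\frac{|\xi^{-}|^{2n}|\xi^{+}|^{2m}}{\sqrt{(2n+1)!(2m+1)!}}e^{-(|\xi^{-}|^{2}+|\xi^{+}|^{2})/2}
=\frac{(\sin\theta)^{2n}(\cos\theta)^{2m}}{\sqrt{(2n+1)!(2m+1)!}}\,|\xi|^{2(n+m)}e^{-|\xi|^{2}/2}$,
which is \emph{exactly} a scalar multiple of $\mathcal{F}(\sqrt{\mu}\,\varphi_{n+m})$; integrating in $\theta$ and comparing normalizations gives \eqref{mumn} with no residue, and the loss term is handled by $\mathcal{F}(\sqrt{\mu}\,\varphi_{n})(0)=\delta_{0,n}$, yielding $(i_1)$, $(i_2)$ at once.

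By contrast, your route keeps the Laguerre structure on the Fourier side and then appeals to an unproven ``collapse modulo lower-order Laguerre components.'' Even if such an identity held in some form, the argument as sketched would only compute the $\varphi_{n+m}$-component of ${\bf \Gamma}(\varphi_{n},\varphi_{m})$ and would not exclude nonzero components along $\varphi_{k}$, $k<n+m$ — but the exact absence of those components is the content of $(ii)$ and is what the triangular system \eqref{ODE-2} and the trilinear estimates of Section \ref{S3} depend on. (A quick check with $n=m=1$ also shows the leading coefficient in your claimed collapse does not match, another symptom of the wrong Fourier transform.) The skeleton of your proposal — Bobylev's identity, $|\xi^{+}|=|\xi|\cos\theta$, $|\xi^{-}|=|\xi|\sin\theta$, the restriction $\theta\in[0,\pi/4]$ from the support of $b$, and the Gamma-function bookkeeping for the factor $\sqrt{(2n+2m+1)!/[(2n+1)!(2m+1)!]}$ — is the right one, but the proof only closes once you replace the eigenfunction claim by the monomial-Gaussian Fourier transform identity, after which no Laguerre product identity is needed at all.
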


\begin{remark}\label{remark-decomp}
Obviously, we can deduce from $(i_1)$ and $(i_2)$ of Proposition \ref{expansion} that
\begin{equation*}
\quad\forall n\in\mathbb{N}, \quad{\bf \Gamma}(\varphi_{0},\varphi_{n})+{\bf \Gamma}(\varphi_{n},\varphi_{0})=-\lambda_{n}\, \varphi_{n}.
\end{equation*}
It is well known that the linearized radially symmetric Boltzmann operator $\mathcal{L}$ behave as a fractional harmonic oscillator $\mathcal{H}^s$ with $s$ given in \eqref{beta}, see \cite{NYKC1}.
Moreover, from Theorem 2.2 in \cite{NYKC2},
 for $n\in\mathbb{N}$ and $n\geq2$,
\begin{equation}\label{eq:3.111}
 \lambda_{n}\approx n^s.
\end{equation}
Namely, there exists a constant $c_0>0$, such that,
  $$c_0 n^s\leq\lambda_{n}\leq\frac{1}{c_0} n^s.$$
We can also refer to \cite{HAOLI}.
\end{remark}

\noindent{\bf Explicit solution of the Cauchy problem \eqref{eq-1}.}
 Now we solve explicitly the Cauchy problem associated to the non-cutoff radial symmetric spatially
homogeneous Boltzmann equation with Maxwellian molecules for a small $Q^{-\frac{3}{2}-\alpha}_r(\mathbb{R}^3)$-initial radial data.

We search a radial solution to the Cauchy problem \eqref{eq-1} in the form
$$
g(t)=\sum^{+\infty}_{n=0}g_{n}(t)\varphi_{n}\, \,\,\mbox{with}\,\,\, g_n(t)=\left\langle g(t),\, \varphi_{n}\right\rangle
$$
with initial data
$$
g|_{t=0}=g^0=\sum^{+\infty}_{n=0}\left\langle g^0,\, \varphi_{n}\right\rangle\varphi_{n}\, .
$$
Remark that $g^0\in Q^{-\frac{3}{2}-\alpha}_r(\mathbb{R}^3)$ is equivalent to $g^0$ radial and
$$
\|g_0\|^2_{Q^{-\frac{3}{2}-\alpha}_r(\mathbb{R}^3)}=\sum_n n^{-\frac{3}{2}-\alpha}\left|\langle g^0,\, \varphi_{n}\rangle\right|^2<+\infty,
$$
see appendix \ref{appendix}.

It follows from Proposition \ref{expansion} and Remark \ref{remark-decomp} that, for convenable radial symmetric function $g$, we have
\begin{align*}
{\bf \Gamma}(g,g)
&=-\sum^{+\infty}_{n=0}
g_0(t)g_n(t)
\lambda_{n}\varphi_{n}\\
&\quad+\sum^{+\infty}_{n=1}\sum^{+\infty}_{m=1}g_n(t)g_m(t)\mu_{n,m}
\varphi_{m+n},
\end{align*}
where $\mu_{n,m}$ was defined in \eqref{mumn}.  This implies that,
\begin{align*}
{\bf \Gamma}(g,g)
&=\sum^{+\infty}_{n=0}
\Big[
-g_0(t)g_n(t)\lambda_{n}
+\sum_{\substack{k+l=n\\k\geq1,l\geq1}}g_k(t)g_l(t)\mu_{k,l}
\Big]
\varphi_{n}.
\end{align*}
For radial symmetric function $g$, we also have
\begin{equation*}
\mathcal{L}( g )=\sum^{+\infty}_{n=0}\lambda_{n}\,g_n(t)\,\varphi_{n}.
\end{equation*}
Formally, we take inner product with $\varphi_{n}$ on both sides of \eqref{eq-1},
we find that the functions $\{g_{n}(t)\}$ satisfy the following infinite system
of the differential equations
\begin{equation}\label{ODE-1}
\partial_t g_n(t)+\lambda_{n}\,g_n(t)=-g_0(t)g_n(t)\lambda_{n}
+\sum_{\substack{k+l=n\\k\geq1,l\geq1}}g_k(t)g_l(t)\mu_{k,l},\quad\forall n\in\mathbb{N},
\end{equation}
with initial data
$$g_n(0)=\left\langle g^0,\varphi_{n}\right\rangle, \quad\forall n\in\mathbb{N}.$$
Consider that $g^0\in Q^{-\frac{3}{2}-\alpha}_r(\mathbb{R}^3)\cap \mathcal{N}^{\perp} $ and $\lambda_{0}=\lambda_{1}=0$, we have
$$g_0(0)=g_1(0)=0.$$
The infinite system of the differential equations \eqref{ODE-1} reduces to be
\begin{equation}\label{ODE-2}
\left\{ \begin{aligned}
         &g_0(t)=g_1(t)=0\\
         &\partial_t g_n(t)+\lambda_{n}\,g_n(t)=\sum_{\substack{k+l=n\\k\geq2,l\geq2}}\mu_{k,l}g_k(t)g_l(t),\forall n\geq2, \\
         &g_n(0)=\left\langle g^0,\varphi_{n}\right\rangle.
\end{aligned} \right.
\end{equation}
On the right hand side of the second equation in \eqref{ODE-2}, the indices $k$ and $l$ are always strictly less than $n$, then this system of the differential equations is triangular, which can be explicitly solved while solving a sequence of linear differential equations.

The proof of Theorem \ref{trick} is reduced to prove the convergence of following series
\begin{equation}\label{ODE}
g(t)=\sum^{+\infty}_{n=2}g_n(t)\varphi_{n}
\end{equation}
in the function space $Q^{-\frac{3}{2}-\alpha}_r(\mathbb{R}^3)$.

\smallskip
\noindent{\bf Measures spaces.} Following Cannone and Karch \cite{Cannone-Karch}, the Fourier transform of a probability measure $F\in P_0(\mathbb{R}^3)$ called a characteristic function which is
$$
\varphi(\xi)=\hat{f}(\xi)=\mathcal{F}(F)(\xi)=\int_{\mathbb{R}^3}e^{-iv\cdot\xi}dF(v).
$$
Note that $\int_{\mathbb{R}^3}dF(v)=1$.  Set $\mathcal{K}= \mathcal{F}(P_0(\mathbb{R}^3))$, inspired by \cite{TV} and the references, Cannone and Karch \cite{Cannone-Karch} defined a subspace $\mathcal{K}^{\alpha}$ for $\alpha\geq 0$ as follows:
\begin{equation*}
\mathcal{K}^{\alpha}=\{\varphi\in\mathcal{K}; \|\varphi-1\|_{\alpha}<+\infty\},
\end{equation*}
where
\begin{equation*}
\|\varphi-1\|_{\alpha}=\sup_{\xi\in\mathbb{R}^3}\frac{|\varphi(\xi)-1|}{|\xi|^{\alpha}}.
\end{equation*}
The space $\mathcal{K}^{\alpha}$ endowed with the distance
\begin{equation*}
\|\varphi-\psi\|_{\alpha}=\sup_{\xi\in\mathbb{R}^3}\frac{|\varphi(\xi)-\psi(\xi)|}{|\xi|^{\alpha}}
\end{equation*}
is a complete metric space (see Proposition 3.10 in \cite{Cannone-Karch}).
However, in this classification, the space $\mathcal{K}^{\alpha}$ is strictly bigger than $\mathcal{F}(P_{\alpha}(\mathbb{R}^3))$ for $\alpha\in]0, 2[.$
Indeed, it is shown in Remark 3.16 of \cite{Cannone-Karch} that the function $\varphi_{\alpha}(\xi)=e^{-|\xi|^{\alpha}}$ with $\alpha\in ]0, 2[$ ,
belongs to $\mathcal{K}^{\alpha}$ but $\mathcal{F}^{-1}(\varphi_{\alpha})$ is not contained in $P_{\alpha}(\mathbb{R}^3)$. In order to fill this gap, Morimoto, Wang and Yang in \cite{Morimoto-Wang-Yang} introduce a classification on the characteristic functions
$$
\mathcal{M}^{\alpha}=\{\varphi\in\mathcal{K}^{\alpha};\|\varphi-1\|_{\mathcal{M}^{\alpha}}<+\infty\}, \,\alpha\in ]0, 2[\, ,
$$
where
$$
\|\varphi-1\|_{\mathcal{M}^{\alpha}}=\int_{\mathbb{R}^3}\frac{|\varphi(\xi)-1|}{|\xi|^{3+\alpha}}d\xi\, .
$$
For $\varphi, \tilde{\varphi}\in\mathcal{M}^{\alpha},$  put
$$
\|\varphi-\tilde{\varphi}\|_{\mathcal{M}^{\alpha}}=\int_{\mathbb{R}^3}\frac{|\varphi(\xi)-\tilde{\varphi}(\xi)|}{|\xi|^{3+\alpha}}d\xi.
$$
It is proved in Theorem 1.1 of \cite{Morimoto-Wang-Yang} that
$\mathcal{M}^{\alpha}\subset\mathcal{F}(P_{\alpha}(\mathbb{R}^3))\subsetneq\mathcal{K}^{\alpha}$ for $\alpha\in ]0, 2[$ .

\smallskip
\noindent{\bf The proof of \eqref{ex2}.} For any $0<\beta<\alpha<2$, we are ready to prove \eqref{ex2},  which shows the relation between $Q^{-\frac{3}{2}-\beta}_r(\mathbb{R}^3)$ and $\mathcal{M}^{\alpha}$.

Let $g\in Q^{-\frac{3}{2}-\beta}_r(\mathbb{R}^3)$  with $\langle g^0, \sqrt{\mu}\rangle=0$, recalled that $\varphi_0=\sqrt{\mu}$,
we have the decomposition
$$
g=\sum^{+\infty}_{k=1}g_k\, \varphi_{k},\,\,\,\mbox{with}\,\,\, g_k=\langle g,\, \varphi_{k}\rangle\, .
$$
Then
$$
\|\widehat{\mu}+\widehat{\sqrt{\mu}g}\, -1\|_{\mathcal{M}^{\alpha}}
=\int_{\mathbb{R}^3}\frac{|\mu(\xi)+\mathcal{F}(\sum^{+\infty}_{k=1}g_k\sqrt{\mu}\varphi_{k})(\xi)-1|}{|\xi|^{3+\alpha}}d\xi\, .
$$
On the other hand,
$$
1=\widehat{\delta_0}=\widehat{e^{-\frac{|v|^2}{4}}\delta_0}=(2\pi)^{\frac{3}{4}}\widehat{\sqrt{\mu}\, \delta_0},
$$
we have
\begin{align*}
(2\pi)^{\frac{3}{4}}\sqrt{\mu}\, \delta_0&=(2\pi)^{\frac{3}{4}}\sum^{+\infty}_{k=0}\langle \delta_0,\varphi_{k}\rangle\sqrt{\mu}\, \varphi_{k}\\
&=(2\pi)^{\frac{3}{4}}\sum^{+\infty}_{k=0}\frac{1}{\pi}
\sqrt{\frac{\Gamma(k+\frac{3}{2})}{\sqrt{2}k!}}\sqrt{\mu}\, \varphi_{k}\\
&=\mu+(2\pi)^{\frac{3}{4}}\sum^{+\infty}_{k=1}\frac{1}{\pi}
\sqrt{\frac{\Gamma(k+\frac{3}{2})}{\sqrt{2}k!}}\sqrt{\mu}\, \varphi_{k}=\mu+\sum^{+\infty}_{k=1}a_k\sqrt{\mu}\, \varphi_{k},
\end{align*}
with $$a_k=\sqrt{\frac{2\Gamma(k+\frac{3}{2})}{\sqrt{\pi}k!}},$$
one can verify that
$$
\|\widehat{\mu}+\widehat{\sqrt{\mu}g}\, -1\|_{\mathcal{M}^{\alpha}}
=\int_{\mathbb{R}^3}\frac{|\mathcal{F}(\sum^{+\infty}_{k=1}(g_k-a_k)\sqrt{\mu}\, \varphi_{k})(\xi)|}{|\xi|^{3+\alpha}}d\xi.
$$
Recalled Lemma 3.2 in \cite{NYKC3} (see also Lemma 6.6 in \cite{GLX}) that
$$
\mathcal{F}(\sqrt{\mu}\, \varphi_{k})=\frac{1}{\sqrt{(2k+1)!}}|\xi|^{2k}e^{-\frac{|\xi|^2}{2}}.
$$
By Fubini theorem,
\begin{align*}
\|\widehat{\mu}+\widehat{\sqrt{\mu}g}\, -1\|_{\mathcal{M}^{\alpha}}=&\int_{\mathbb{R}^3}\frac{|\sum^{+\infty}_{k=1}(g_k-a_k)\mathcal{F}(\sqrt{\mu}\,\varphi_{k})(\xi)|}{|\xi|^{3+\alpha}}d\xi\\
=&\int_{\mathbb{R}^3}\frac{|\sum^{+\infty}_{k=1}(g_k-a_k)
\frac{1}{\sqrt{(2k+1)!}}|\xi|^{2k}e^{-\frac{|\xi|^2}{2}}|}{|\xi|^{3+\alpha}}d\xi\\
\leq&\sum^{+\infty}_{k=1}
\frac{|g_k-a_k|}{\sqrt{(2k+1)!}}\int_{\mathbb{R}^3}\frac{|\xi|^{2k}e^{-\frac{|\xi|^2}{2}}}{|\xi|^{3+\alpha}}d\xi\\
=&4\pi\sum^{+\infty}_{k=1}
\frac{|g_k-a_k|}{\sqrt{(2k+1)!}}2^{k-1-\frac{\alpha}{2}}\Gamma(k-\frac{\alpha}{2}).
\end{align*}
By using the Stirling equivalent
$$\Gamma(x+1)\sim_{x\rightarrow+\infty}\sqrt{2\pi x}\left(\frac{x}{e}\right)^x,$$
it follows that, $\forall k\geq1$
$$2^{k-1-\frac{\alpha}{2}}\frac{\Gamma(k-\frac{\alpha}{2})}{\sqrt{(2k+1)!}}\sim k^{\frac{5}{4}+\frac{\alpha}{2}},\quad a_k\sim k^{\frac{1}{4}}.$$
Therefore
\begin{align*}
\|\widehat{\mu}+\widehat{\sqrt{\mu}g}-1\|_{\mathcal{M}^{\alpha}}
&\lesssim\sum^{+\infty}_{k=1}\frac{|g_k|+|a_k|}{k^{\frac{5}{4}+\frac{\alpha}{2}}}\\
&\lesssim\sum^{+\infty}_{k=1}\frac{|g_k|}{k^{\frac{5}{4}+\frac{\alpha}{2}}}+\sum^{+\infty}_{k=1}\frac{1}{k^{1+\frac{\alpha}{2}}}.
\end{align*}
Recalled the definition of  $ Q^{-\frac{3}{2}-\beta}_r(\mathbb{R}^3)$  that
$$
\sum^{+\infty}_{k=1}\frac{|g_k|}{k^{\frac{5}{4}+\frac{\alpha}{2}}}\leq \left(\sum^{+\infty}_{k=1}k^{-\frac{3}{2}-\beta}|g_k|^2\right)^{\frac{1}{2}} \left(\sum^{+\infty}_{k=1}k^{-1-\alpha+\beta}\right)^{\frac{1}{2}}\lesssim\|g\|_{Q^{-\frac{3}{2}-\beta}_r(\mathbb{R}^3)}
$$
and
$$
\sum^{+\infty}_{k=1}\frac{1}{k^{1+\frac{\alpha}{2}}}<+\infty,
$$
we conclude that $f=\mu+\sqrt{\mu}g\in\mathcal{F}^{-1}(\mathcal{M}^{\alpha}).$

\smallskip\noindent
{\bf Example.} We show the Gelfand-Shilov smoothing effect with the initial datum \eqref{ex2}, here we only need to prove $g^0\in Q^{-\frac{3}{2}-\alpha}_r(\mathbb{R}^3)\cap \mathcal{N}^{\perp}$.

Recalled the spectrum functions $\varphi_{0}(v)$ and $\varphi_1(v)$ at the beginning of this Section, then we have
$$g^0=\frac{1}{\sqrt{\mu}}\delta_{\mathbf{0}}-\sqrt{\mu}-
\left(\frac{3}{2}-\frac{|v|^2}{2}\right)\sqrt{\mu}
=\frac{1}{\sqrt{\mu}}\delta_{\mathbf{0}}-\varphi_{0}-\sqrt{\frac{3}{2}}\varphi_{1},$$
one can verify that
$$\langle g^0, \varphi_0\rangle=\langle g^0,\varphi_1\rangle=0.$$
This shows that $g^0\in\mathcal{N}^{\perp}$.  Now we prove that $g^0\in \,Q^{-\frac{3}{2}-\alpha}_r(\mathbb{R}^3).$

Since $g^0\in\mathcal{N}^{\perp}$, we can write $g^0$ in the form
$$
g^0=\sum^{+\infty}_{k=2}\langle g^0, \varphi_{k}\rangle\, \varphi_{k},
$$
where we can calculate that, for $k\ge 2$,
$$
\langle g^0, \varphi_{k}\rangle=\langle \mu^{-\frac 12}\delta_{\mathbf{0}}, \varphi_{k}\rangle=\sqrt{\frac{2\Gamma(k+\frac{3}{2})}{\sqrt{\pi}k!}}.
$$
By using the Stirling equivalent
$$
\Gamma(x+1)\sim_{x\rightarrow+\infty}\sqrt{2\pi x}\left(\frac{x}{e}\right)^x,
$$
we have that, $\forall k\geq2$
$$
\sqrt{\frac{2\Gamma(k+\frac{3}{2})}{\sqrt{\pi}k!}}\sim k^{\frac{1}{4}}\, .
$$
Therefore, for any $\alpha>0$,
$$
\|g^0\|^2_{Q^{-\frac{3}{2}-\alpha}_r(\mathbb{R}^3)}
=\sum^{+\infty}_{k=2}k^{-\frac{3}{2}-\alpha}|\langle g^0, \varphi_{k}\rangle|^2
\lesssim\sum^{+\infty}_{k=2}\frac{1}{k^{1+\alpha}}<+\infty.
$$
This implies that $g^0\in\,Q^{-\frac{3}{2}-\alpha}_r(\mathbb{R}^3)$, we end the proof of the Example.


\section{The trilinear estimates for Boltzmann operator}\label{S3}

To prove the convergence of the formal solution obtained in the precedent Section, we need to estimate the following trilinear terms
$$
\left({\bf \Gamma}(f,g),h\right)_{L^2(\mathbb{R}^3)},
\,\,\,f,g,h\in\mathscr{S}_r(\mathbb{R}^3)\cap Q^{-\frac{3}{2}-\alpha}_r(\mathbb{R}^3) \cap \mathcal{N}^{\perp}.
$$
By a proof similar to that in Lemma 3.4 in \cite{NYKC3} or we can refer to Section 6  in \cite{GLX}, we present the estimation properties of the eigenvalues of the linearized radially symmetric Boltzmann operator $\mathcal{L}$, which is a basic tool in the proof of the trilinear estimate with exponential weighted.

\begin{lemma}\label{sum}
For $k\ge2,l\geq2,$
$$\mu_{k,l}=\sqrt{\frac{(2k+2l+1)!}{(2k+1)!(2l+1)!}}
\left(\int^{\frac{\pi}{4}}_0\beta(\theta)(\sin\theta)^{2k}(\cos\theta)^{2l}d\theta\right)$$
was defined in \eqref{mumn}, for $n\geq4$ and for any $0<\alpha<2s<2$ with s given in \eqref{beta}, we have
\begin{equation}\label{estimatesum}
\sum_{\substack{k+l=n\\k\geq2,l\geq2}}\frac{|\mu_{k,l}|^2}{l^{s-\frac{3}{2}-\alpha}k^{-\frac{3}{2}-\alpha}}\lesssim n^{s+\frac{3}{2}+\alpha},
\end{equation}
and
\begin{equation}\label{estimatesum-1}
\sum_{\substack{k+l=n\\k\geq2,l\geq2}}\frac{|\mu_{k,l}|^2}{l^{-\frac{3}{2}-\alpha}k^{-\frac{3}{2}-\alpha}}\lesssim n^{2s+\frac{3}{2}+\alpha}.
\end{equation}

\end{lemma}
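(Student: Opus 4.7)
The plan is to first establish a clean pointwise upper bound on $|\mu_{k,l}|^2$ by combining the angular singularity of $\beta$ with Stirling's formula, and then plug this bound into each of \eqref{estimatesum} and \eqref{estimatesum-1} and evaluate the resulting combinatorial sums, splitting the range of summation at $k\asymp l\asymp n/2$ (where $n=k+l$).

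For the pointwise estimate, the starting point is the bound $\beta(\theta)\lesssim\theta^{-1-2s}$ on $(0,\pi/4]$ coming from \eqref{beta}, together with the elementary inequality $\theta\ge\sin\theta$ on this interval. These two facts give
\[
\int_0^{\pi/4}\beta(\theta)(\sin\theta)^{2k}(\cos\theta)^{2l}\,d\theta\ \lesssim\ \int_0^{\pi/4}(\sin\theta)^{2k-1-2s}(\cos\theta)^{2l}\,d\theta,
\]
and the substitution $u=\sin^2\theta$ bounds the right-hand side by half the complete Beta function $B(k-s,l+\tfrac12)=\Gamma(k-s)\Gamma(l+\tfrac12)/\Gamma(n+\tfrac12-s)$ (permissible because $k,l\ge 2>s$ since $s<1$). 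Applying Stirling in the form $\Gamma(x)=\sqrt{2\pi}\,x^{x-1/2}e^{-x}(1+O(1/x))$ to each of $(2n+1)!$, $(2k+1)!$, $(2l+1)!$, $\Gamma(k-s)$, $\Gamma(l+\tfrac12)$ and $\Gamma(n+\tfrac12-s)$, one checks that all exponential factors cancel because $(k-s)+(l+\tfrac12)=(n+\tfrac12-s)$ and $(2k+1)+(2l+1)=(2n+2)$, and after simplification one arrives at the clean bound
\[
|\mu_{k,l}|^2\ \lesssim\ \frac{n^{2s+\frac{3}{2}}}{k^{2s+\frac{5}{2}}\,l^{\frac{3}{2}}},\qquad k,l\ge 2.
\]

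Inserting this into \eqref{estimatesum} yields, after cancellation,
\[
\sum_{\substack{k+l=n\\k,l\ge 2}}\frac{|\mu_{k,l}|^{2}}{l^{s-\frac{3}{2}-\alpha}k^{-\frac{3}{2}-\alpha}}\ \lesssim\ n^{2s+\frac{3}{2}}\sum_{\substack{k+l=n\\k,l\ge 2}}\frac{l^{\alpha-s}}{k^{2s+1-\alpha}}.
\]
I would split this at $k=\lfloor n/2\rfloor$. In the range $k\le n/2$ one has $l\asymp n$ and the $k$-series $\sum_{k\ge 2}k^{-(2s+1-\alpha)}$ converges by the standing hypothesis $\alpha<2s$ (which forces $2s+1-\alpha>1$); this produces the target contribution $n^{2s+3/2}\cdot n^{\alpha-s}=n^{s+3/2+\alpha}$. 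In the complementary range $k>n/2$, one has $(n-l)^{-(2s+1-\alpha)}\asymp n^{-(2s+1-\alpha)}$ and $\sum_{l=2}^{n/2}l^{\alpha-s}\lesssim n^{\alpha-s+1}$, giving a contribution of order $n^{3/2-s+2\alpha}$; the inequality $3/2-s+2\alpha\le s+3/2+\alpha$ is equivalent to $\alpha\le 2s$, so this piece is absorbed. Estimate \eqref{estimatesum-1} is handled verbatim with the exponent $\alpha-s$ replaced by $\alpha$; the same splitting produces the main contribution $n^{2s+3/2+\alpha}$ with the parasitic term $n^{3/2+2\alpha}$ absorbed again under $\alpha\le 2s$.

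The main obstacle is the sharp pointwise bound on $|\mu_{k,l}|^2$: Stirling must be applied uniformly in both $k$ and $l$ down to the smallest admissible values $k=l=2$, with careful bookkeeping of the sub-leading factors $(1+O(1/k))(1+O(1/l))$ and verification that the exponential terms cancel identically. Once the clean form $n^{2s+3/2}k^{-(2s+5/2)}l^{-3/2}$ is available, the rest of the argument is elementary, and the exponent $2s$ in the hypothesis $\alpha<2s$ shows up precisely as the threshold for convergence of both pieces after the split; in particular the assumption is essentially sharp for \eqref{estimatesum} and \eqref{estimatesum-1}.
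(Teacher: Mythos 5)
Your proposal is correct and follows essentially the same route as the paper: bound $\beta(\theta)$ by its angular singularity, reduce the integral to a Beta function via $t=\sin^2\theta$, apply Stirling to get a pointwise bound on $|\mu_{k,l}|^2$, and split the convolution sum at $k\asymp n/2$, with $\alpha<2s$ entering exactly as the convergence/absorption threshold. The only (harmless) deviation is that keeping $(\cos\theta)^{2l}$ gives you $B(k-s,l+\tfrac12)$ and hence the bound $|\mu_{k,l}|^2\lesssim n^{2s+\frac32}k^{-2s-\frac52}l^{-\frac32}$, weaker by a factor $n/l$ than the paper's $n^{2s+\frac12}k^{-2s-\frac52}l^{-\frac12}$, but your summation still closes with the same exponents.
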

\begin{proof}
Since $\beta(\theta)$ satisfies the condition \eqref{beta}, we can begin by noticing that
$$\mu_{k,l}=\sqrt{\frac{(2k+2l+1)!}{(2k+1)!(2l+1)!}}\int^{\frac{\pi}{4}}_0(\sin\theta)^{2k-1-2s}(\cos\theta)^{2l+1}d\theta.$$
By using the substitution rule with $t=(\sin\theta)^2$ that we have
$$\int^{\frac{\pi}{4}}_0(\sin\theta)^{2k-1-2s}(\cos\theta)^{2l+1}d\theta=\frac{1}{2}\int^{\frac{1}{2}}_0t^{k-1-s}(1-t)^{l}dt
\lesssim\frac{\Gamma(k-s)l!}{\Gamma(k+l+1-s)}.$$
Recalled the Stirling equivalent
$$\Gamma(x+1)\sim_{x\rightarrow+\infty}\sqrt{2\pi x}\left(\frac{x}{e}\right)^x,\quad\Gamma(l+1)=l!,$$
for $k\ge2,l\geq2$,  it follows that,
\begin{align*}
\mu_{k,l}&\lesssim\sqrt{\frac{2k+2l+1}{(2k+1)(2l+1)}}\sqrt{\frac{(2k+2l)!}{(2k)!(2l)!}}\frac{\Gamma(k-s)l!}{\Gamma(k+l+1-s)}
\\
&\lesssim\sqrt{\frac{k+l}{kl}}\left(\frac{k+l}{kl}\right)^{\frac{1}{4}}\left(\frac{2k+2l}{e}\right)^{k+l}\left(\frac{e}{2k}\right)^{k}\left(\frac{e}{2l}\right)^{l}\\
&\qquad\times\sqrt{\frac{kl}{k+l}}\left(\frac{k}{e}\right)^{k-s-1}\left(\frac{l}{e}\right)^{l}\left(\frac{e}{k+l}\right)^{k+l-s}\\
&\lesssim\left(\frac{k+l}{kl}\right)^{\frac{1}{4}}\frac{(k+l)^{s}}{k^{1+s}}
=\frac{(k+l)^{\frac{1}{4}+s}}{l^{\frac{1}{4}}k^{\frac{5}{4}+s}}.
\end{align*}
Then for $n\geq4$ and for any $0<\alpha<2s<2$, we have
\begin{align*}
\sum_{\substack{k+l=n\\k\geq2,l\geq2}}\frac{|\mu_{k,l}|^2}{l^{s-\frac{3}{2}-\alpha}k^{-\frac{3}{2}-\alpha}}
&\lesssim\sum_{\substack{k+l=n\\k\geq2,l\geq2}}\frac{(k+l)^{\frac{1}{2}+2s}}{l^{s-1-\alpha}k^{1+2s-\alpha}}\\
&=\sum^{n-2}_{k=2}\frac{n^{2s+\frac{1}{2}}}{(n-k)^{s-1-\alpha}k^{1+2s-\alpha}}\\
&\leq2\sum^{\frac{n}{2}}_{k=2}\frac{n^{s+\frac{3}{2}+\alpha}}{k^{1+2s-\alpha}}+2\sum_{\frac{n}{2}<k\leq n-2}\frac{n^{s+\frac{3}{2}+\alpha}}{(n-k)^{1+2s-\alpha}}\\
&\lesssim n^{s+\frac{3}{2}+\alpha},
\end{align*}
and
\begin{align*}
\sum_{\substack{k+l=n\\k\geq2,l\geq2}}\frac{|\mu_{k,l}|^2}{l^{-\frac{3}{2}-\alpha}k^{-\frac{3}{2}-\alpha}}
&\lesssim\sum_{\substack{k+l=n\\k\geq2,l\geq2}}\frac{(k+l)^{\frac{1}{2}+2s}}{l^{-1-\alpha}k^{1+2s-\alpha}}\\
&=\sum^{n-2}_{k=2}\frac{n^{2s+\frac{1}{2}}}{(n-k)^{-1-\alpha}k^{1+2s-\alpha}}\\
&\leq2\sum^{\frac{n}{2}}_{k=2}\frac{n^{2s+\frac{3}{2}+\alpha}}{k^{1+2s-\alpha}}+2\sum_{\frac{n}{2}<k\leq n-2}\frac{n^{2s+\frac{3}{2}+\alpha}}{(n-k)^{1+2s-\alpha}}\\
&\lesssim n^{2s+\frac{3}{2}+\alpha}.
\end{align*}

These are the formulas \eqref{estimatesum} and \eqref{estimatesum-1}, we end the proof of Lemma \ref{sum}.
\end{proof}

The sharp trilinear estimates for the radially symmetric Boltzmann operator can be derived from the result of Lemma \ref{sum}.
\begin{proposition}\label{estimatetri}
For any $0<\alpha<2s<2$ with $s$ given in \eqref{beta},
there exists a positive constant $C>0$, such that for all $f,g,h\in Q^{-\frac{3}{2}-\alpha}_r(\mathbb{R}^3)\cap \mathcal{N}^{\perp}$ and for any $m\ge 2,\, t\geq0$,
\begin{align*}
&|(\Gamma(\mathbb{S}_m f, \, \mathbb{S}_m g),\mathcal{H}^{-\frac{3}{2}-\alpha}\mathbb{S}_m h)_{L^2}|
\\
\leq& C\|\mathcal{H}^{-\frac{3}{4}-\frac{\alpha}{2}} \mathbb{S}_{m-2}f\|_{L^2}
\|\mathcal{H}^{\frac{s}{2}-\frac{3}{4}-\frac{\alpha}{2}}\mathbb{S}_{m-2} g\|_{L^2}
\|\mathcal{H}^{\frac{s}{2}-\frac{3}{4}-\frac{\alpha}{2}}\mathbb{S}_m h\|_{L^2}
\end{align*}
and for any $m\geq2$, $c>0$,
\begin{align*}
&|(\Gamma(\mathbb{S}_m f, \, \mathbb{S}_m g),e^{ct\mathcal{H}^s}\mathcal{H}^{-\frac{3}{2}-\alpha}\mathbb{S}_m h)_{L^2}|\notag\\
\leq& C\|e^{\frac{1}{2}ct\mathcal{H}^s}\mathcal{H}^{-\frac{3}{4}-\frac{\alpha}{2}}\mathbb{S}_{m-2}f\|_{L^2}
\|e^{\frac{1}{2}ct\mathcal{H}^s}\mathcal{H}^{\frac{s}{2}-\frac{3}{4}-\frac{\alpha}{2}}\mathbb{S}_{m-2}g\|_{L^2}\\
&\qquad \qquad \qquad \qquad \times \|e^{\frac{1}{2}ct\mathcal{H}^s}\mathcal{H}^{\frac{s}{2}-\frac{3}{4}-\frac{\alpha}{2}}\mathbb{S}_m h\|_{L^2}\, .
\end{align*}
where  $\mathbb{S}_m$ is the orthogonal projector onto the $m+1$ energy levels
\begin{equation}\label{SN}
\mathbb{S}_{m}f=\sum^{m}_{k=0}\langle f,\varphi_{k}\rangle\varphi_{k}.
\end{equation}
\end{proposition}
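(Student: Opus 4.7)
The strategy is to unfold the trilinear form in the orthonormal basis $\{\varphi_n\}$, apply Proposition \ref{expansion}$(ii)$ to exploit the triangular structure of $\Gamma$, and then estimate the resulting discrete sum by two successive Cauchy--Schwarz inequalities, calling on Lemma \ref{sum} to control the weighted combinatorial factor involving $|\mu_{k,l}|^2$.

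Expanding $f = \sum_{k\ge 2} f_k \varphi_k$, $g = \sum_{l \ge 2} g_l \varphi_l$, $h = \sum_{n \ge 2} h_n \varphi_n$ (the $n = 0, 1$ modes vanish since $f, g, h \in \mathcal{N}^{\perp}$), Proposition \ref{expansion}$(ii)$ yields
\[
\Gamma(\mathbb{S}_m f, \mathbb{S}_m g) = \sum_{2 \le k, l \le m} f_k g_l \mu_{k,l}\, \varphi_{k+l}.
\]
Since $\mathcal{H}^{\beta} \varphi_n = (2n+\tfrac{3}{2})^{\beta} \varphi_n$, orthonormality forces the pairing with $\mathcal{H}^{-\frac{3}{2}-\alpha}\mathbb{S}_m h$ to be supported on $\{k+l \le m,\, k, l \ge 2\}$, which in particular gives $k, l \le m-2$. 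Regrouping by $n = k+l$, the inner product equals
\[
\sum_{n=4}^{m} (2n+\tfrac{3}{2})^{-\frac{3}{2}-\alpha} h_n \sum_{\substack{k+l=n \\ k, l \ge 2}} f_k g_l \mu_{k,l}.
\]

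For the first estimate, split the outer weight as $(2n+\tfrac{3}{2})^{-\frac{3}{2}-\alpha} = (2n+\tfrac{3}{2})^{\frac{s}{2}-\frac{3}{4}-\frac{\alpha}{2}} \cdot (2n+\tfrac{3}{2})^{-\frac{3}{4}-\frac{\alpha}{2}-\frac{s}{2}}$. A first Cauchy--Schwarz in $n$ assigns the first half to $h_n$, producing $\|\mathcal{H}^{\frac{s}{2}-\frac{3}{4}-\frac{\alpha}{2}}\mathbb{S}_m h\|_{L^2}$; a second Cauchy--Schwarz on the inner sum gives
\[
\Bigl|\sum_{\substack{k+l=n \\ k, l \ge 2}} f_k g_l \mu_{k,l}\Bigr|^2
\le \Bigl(\sum_{\substack{k+l=n \\ k, l \ge 2}} \frac{|\mu_{k,l}|^2}{k^{-\frac{3}{2}-\alpha} l^{s-\frac{3}{2}-\alpha}}\Bigr)\Bigl(\sum_{\substack{k+l=n \\ k, l \ge 2}} k^{-\frac{3}{2}-\alpha}|f_k|^2 \cdot l^{s-\frac{3}{2}-\alpha}|g_l|^2\Bigr).
\]
By \eqref{estimatesum} the first factor is $\lesssim n^{s+\frac{3}{2}+\alpha}$, which cancels exactly the residual weight $(2n+\tfrac{3}{2})^{-\frac{3}{2}-\alpha-s}$ inherited from the outer Cauchy--Schwarz. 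The remaining double sum over $(k, l)$ then factors as a product, which is bounded by $\|\mathcal{H}^{-\frac{3}{4}-\frac{\alpha}{2}} \mathbb{S}_{m-2} f\|_{L^2}^2 \cdot \|\mathcal{H}^{\frac{s}{2}-\frac{3}{4}-\frac{\alpha}{2}} \mathbb{S}_{m-2} g\|_{L^2}^2$ (using $k, l \le m-2$), yielding the first inequality.

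For the exponentially weighted version, the inner product acquires an additional factor $e^{ct(2n+\frac{3}{2})^s}$ with $n = k+l$. The key observation is the subadditivity $(a+b)^s \le a^s + b^s$, valid for $0 < s \le 1$ and $a, b \ge 0$, applied with $a = 2k+\tfrac{3}{2}$, $b = 2l+\tfrac{3}{2}$ (so that $2n+\tfrac{3}{2} < a+b$). This gives
\[
e^{ct(2n+\frac{3}{2})^s} \le e^{\frac{ct}{2}(2n+\frac{3}{2})^s} \cdot e^{\frac{ct}{2}(2k+\frac{3}{2})^s} \cdot e^{\frac{ct}{2}(2l+\frac{3}{2})^s},
\]
allowing the exponential weight to be redistributed symmetrically among $f_k$, $g_l$, and $h_n$. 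The remainder of the argument proceeds verbatim with each $\mathcal{H}^{\bullet}$ replaced by $e^{\frac{1}{2}ct\mathcal{H}^s}\mathcal{H}^{\bullet}$.

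\textbf{Expected obstacle.} The argument is essentially bookkeeping; the delicate choices are the precise weight split matched to the exponents $(-\tfrac{3}{2}-\alpha,\, s-\tfrac{3}{2}-\alpha)$ in \eqref{estimatesum} (its asymmetric version, not \eqref{estimatesum-1}), which explains why only the two later factors in the conclusion carry an $\mathcal{H}^{\frac{s}{2}}$. The exponential redistribution genuinely requires $s \le 1$, which holds under the standing assumption on the angular cross section.
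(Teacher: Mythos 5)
Your proposal is correct and follows essentially the same route as the paper: expansion in the eigenbasis $\{\varphi_n\}$, the triangular identity of Proposition \ref{expansion}$(ii)$, two Cauchy--Schwarz applications with the asymmetric weight split matched to \eqref{estimatesum}, and the subadditivity $(2k+2l+\tfrac{3}{2})^s\le(2k+\tfrac{3}{2})^s+(2l+\tfrac{3}{2})^s$ to redistribute the exponential weight. The only (immaterial) difference is the order of the two Cauchy--Schwarz steps — you do the sum in $n$ first and then the convolution sum, while the paper splits in $l$ and then $k$ — and both land on the same three factors with $k,l\le m-2$.
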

\begin{proof}
Let $f,g,h\in Q^{-\frac{3}{2}-\alpha}_r(\mathbb{R}^3) \cap \mathcal{N}^{\perp}$, by using the spectral decomposition, we obtain
$$
f=\sum^{+\infty}_{n=2}\langle f,\varphi_{n}\rangle\varphi_{n},\quad g=\sum^{+\infty}_{n=2}\langle g,\varphi_{n}\rangle\varphi_{n},\quad h=\sum^{+\infty}_{n=2}\langle h,\varphi_{n}\rangle\varphi_{n}.
$$
In convenience, we rewrite  $f_n=\langle f,\varphi_{n}\rangle, g_n=\langle g,\varphi_{n}\rangle,
 h_n=\langle h,\varphi_{n}\rangle.$
We can deduce formally from Proposition \ref{expansion} that
\begin{align*}
 \Gamma(\mathbb{S}_{m}f,\mathbb{S}_{m}g)
&=\sum^{m}_{k=2}\sum^{m}_{l=2}f_k\, g_l\Gamma(\varphi_{k},\varphi_{l})\\
&=\sum^{m}_{k=2}\sum^{m}_{l=2}f_k\, g_l\mu_{k,l}\varphi_{k+l}.
\end{align*}
By using the orthogonal property of $\{\varphi_{n}; \, n\in\mathbb{N}\}$, we have
\begin{align*}
&(\Gamma(\mathbb{S}_m f, \, \mathbb{S}_m g),\mathcal{H}^{-\frac{3}{2}-\alpha}\mathbb{S}_m h)\\
=&\sum^{m}_{n=4}\left(\sum_{\substack{k+l=n\\k\geq2,l\geq2}}\mu_{k,l}f_kg_l\right)
(2n+\frac{3}{2})^{-\frac{3}{2}-\alpha}h_n.
\end{align*}
Therefore,
\begin{align*}
&|(\Gamma(\mathbb{S}_m f, \, \mathbb{S}_m g),\mathcal{H}^{-\frac{3}{2}-\alpha}\mathbb{S}_m h)_{L^2}|\\
\leq& \sum^{m-2}_{l=2}\sum^{m-l}_{k=2}|\mu_{k,l}||f_k||g_l||(2k+2l+\frac{3}{2})^{-\frac{3}{2}-\alpha}||h_{k+l}|\\
\leq&\left(\sum^{m-2}_{l=2}l^{s-\frac{3}{2}-\alpha}|g_l|^2\right)^{\frac{1}{2}}
\left(\sum^{m-2}_{k=2}|f_k|^2k^{-\frac{3}{2}-\alpha}\right)^{\frac{1}{2}}\\
&\times\left( \sum^{m-2}_{l=2}\sum^{m-l}_{k=2}\frac{|\mu_{k,l}|^2|h_{k+l}|^2(k+l)^{-3-2\alpha}}{l^{s-\frac{3}{2}-\alpha}k^{-\frac{3}{2}-\alpha}}\right)^{\frac{1}{2}}\\
=&\|\mathcal{H}^{-\frac{3}{4}-\frac{\alpha}{2}}\mathbb{S}_m f\|_{L^2}
\|\mathcal{H}^{\frac{s}{2}-\frac{3}{4}-\frac{\alpha}{2}}\mathbb{S}_m g\|_{L^2}\\
&\times\left(\sum^{m}_{n=4}\Big(\sum_{\substack{k+l=n\\k\geq2,l\geq2}}\frac{|\mu_{k,l}|^2}{l^{s-\frac{3}{2}-\alpha}k^{-\frac{3}{2}-\alpha}}\Big)|h_n|^2n^{-3-2\alpha}\right)^{\frac{1}{2}}.
\end{align*}
By using \eqref{estimatesum}, for $n\geq4$, $0<\alpha<2s<2$,
$$
\sum_{\substack{k+l=n\\k\geq2,l\geq2}}\frac{|\mu_{k,l}|^2}{l^{s-\frac{3}{2}-\alpha}k^{-\frac{3}{2}-\alpha}}\lesssim n^{s+\frac{3}{2}+\alpha},
$$
we conclude that
\begin{align*}
&|(\Gamma(\mathbb{S}_m f, \, \mathbb{S}_m g),\mathcal{H}^{-\frac{3}{2}-\alpha}\mathbb{S}_m h)|\\
\leq&C\|\mathcal{H}^{-\frac{3}{4}-\frac{\alpha}{2}}\mathbb{S}_{m-2} f\|_{L^2}
\|\mathcal{H}^{\frac{s}{2}-\frac{3}{4}-\frac{\alpha}{2}}\mathbb{S}_{m-2} g\|_{L^2}
\|\mathcal{H}^{\frac{s}{2}-\frac{3}{4}-\frac{\alpha}{2}}\mathbb{S}_m h\|_{L^2}.
\end{align*}
This is the first result of Proposition \ref{estimatetri}.

For the second one,  we have,
\begin{align*}
&(\Gamma(\mathbb{S}_m f, \, \mathbb{S}_m g),\, e^{ct\mathcal{H}^s}\mathcal{H}^{-\frac{3}{2}-\alpha}\mathbb{S}_m h)\\
=&\sum^{m}_{n=4}e^{ct(2n+\frac{3}{2})^s}(2n+\frac{3}{2})^{-\frac{3}{2}-\alpha}\Big(\sum_{\substack{k+l=n\\k\geq2,l\geq2}}\mu_{k,l}f_kg_l\Big)h_n.
\end{align*}
Then
\begin{align*}
&|(\Gamma(\mathbb{S}_m f, \, \mathbb{S}_m g),\, e^{ct\mathcal{H}^s}\mathcal{H}^{-\frac{3}{2}-\alpha}\mathbb{S}_m h)|\\
\leq&\sum^{m-2}_{l=2}\sum^{m-l}_{k=2}|g_l||\mu_{k,l}||f_k||h_{k+l}|e^{ct(2k+2l+\frac{3}{2})^s}
(2k+2l+\frac{3}{2})^{-\frac{3}{2}-\alpha}\\
\lesssim&\left(\sum^{m-2}_{l=2}e^{c(2l+\frac{3}{2})^st}l^{s-\frac{3}{2}-\alpha}|g_l|^2\right)^{\frac{1}{2}}\\
&\times\left(\sum^{m-2}_{l=2}
\frac{ e^{-c(2l+\frac{3}{2})^st}}{l^{s-\frac{3}{2}-\alpha}}
\Big(\sum^{m-l}_{k=2}e^{c(2k+2l+\frac{3}{2})^st}(k+l)^{-\frac{3}{2}-\alpha}|\mu_{k,l}||f_k||h_{k+l}|\Big)^2\right)^{\frac{1}{2}}\\
\leq&\left(\sum^{m-2}_{l=2}e^{c(2l+\frac{3}{2})^st}l^{s-\frac{3}{2}-\alpha}|g_l|^2\right)^{\frac{1}{2}}
\left(\sum^{m-2}_{k=2}e^{c(2k+\frac{3}{2})^st}k^{-\frac{3}{2}-\alpha}|f_k|^2\right)^{\frac{1}{2}}\\
\times&\left(\sum^{m-2}_{l=2}\frac{1}{l^{s-\frac{3}{2}-\alpha}}
\sum^{m-l}_{k=2}e^{2c(2k+2l+\frac{3}{2})^st-c(2l+\frac{3}{2})^st-c(2k+\frac{3}{2})^st}\frac{(k+l)^{-3-2\alpha}}{k^{-\frac{3}{2}-\alpha}}|\mu_{k,l}|^2|h_{k+l}|^2\right)^{\frac{1}{2}}.
\end{align*}
Using the elementary inequality for $k\ge2, l\ge2$,
$$
(2k+2l+\frac{3}{2})^s\leq(2l+\frac{3}{2})^s+(2k+\frac{3}{2})^s,
$$
one can verify that, for $t\geq0$,
\begin{align*}
&|(\Gamma (\mathbb{S}_m f, \, \mathbb{S}_m g),\, e^{ct\mathcal{H}^s}\mathcal{H}^{-\frac{3}{2}-\alpha}\mathbb{S}_m h)|\\
\leq &\|e^{\frac{1}{2}ct\mathcal{H}^s}\mathcal{H}^{-\frac{3}{4}-\frac{\alpha}{2}}\mathbb{S}_{m-2}f\|_{L^2}
\|e^{\frac{1}{2}ct\mathcal{H}^s}\mathcal{H}^{\frac{s}{2}-\frac{3}{4}-\frac{\alpha}{2}}\mathbb{S}_{m-2}g\|_{L^2}\\
&\qquad\times\left(\sum^{m-2}_{l=2}
\frac{1}{l^{s-\frac{3}{2}-\alpha}}
\sum^{m-l}_{k=2}e^{c(2k+2l+\frac{3}{2})^st}\frac{(k+l)^{-3-2\alpha}}{k^{-\frac{3}{2}-\alpha}}|\mu_{k,l}|^2|h_{k+l}|^2\right)^{\frac{1}{2}}\\
=&\|e^{\frac{1}{2}ct\mathcal{H}^s}\mathcal{H}^{-\frac{3}{4}-\frac{\alpha}{2}}\mathbb{S}_{m-2}f\|_{L^2}
\|e^{\frac{1}{2}ct\mathcal{H}^s}\mathcal{H}^{\frac{s}{2}-\frac{3}{4}-\frac{\alpha}{2}}\mathbb{S}_{m-2}g\|_{L^2}\\
&\qquad\times
\left(\sum^{m}_{n=4}e^{c(2n+\frac{3}{2})^st}n^{-3-2\alpha}
\Big(\sum_{\substack{k+l=n\\k\geq2,l\geq2}}\frac{|\mu_{k,l}|^2}{l^{s-\frac{3}{2}-\alpha}k^{-\frac{3}{2}-\alpha}}\Big)|h_n|^2\right)^{\frac{1}{2}}.
\end{align*}
By using Lemma \ref{sum} again that, for $n\geq4$, $0<\alpha<2s<2$,
$$
\sum_{\substack{k+l=n\\k\geq2,l\geq2}}\frac{|\mu_{k,l}|^2}{l^{s-\frac{3}{2}-\alpha}k^{-\frac{3}{2}-\alpha}}\lesssim n^{s+\frac{3}{2}+\alpha}.
$$
We conclude that, for $t\geq0$, $m\geq2$,
\begin{align*}
&|(\Gamma(\mathbb{S}_m f, \, \mathbb{S}_m g),\, e^{ct\mathcal{H}^s}\mathcal{H}^{-\frac{3}{2}-\alpha}\mathbb{S}_m h)_{L^2}|\\
\lesssim&\|e^{\frac{1}{2}ct\mathcal{H}^s}\mathcal{H}^{-\frac{3}{4}-\frac{\alpha}{2}}\mathbb{S}_{m-2}f\|_{L^2}
\|e^{\frac{1}{2}ct\mathcal{H}^s}\mathcal{H}^{\frac{s}{2}-\frac{3}{4}-\frac{\alpha}{2}}\mathbb{S}_{m-2}g\|_{L^2}\\
&\qquad\qquad\times\left(\sum^{m}_{n=4}e^{c(2n+\frac{3}{2})^st}n^{s-\frac{3}{2}-\alpha}|h_n|^2\right)^{\frac{1}{2}}\\
\leq& C\|e^{\frac{1}{2}ct\mathcal{H}^s}\mathcal{H}^{-\frac{3}{4}-\frac{\alpha}{2}}\mathbb{S}_{m-2}f\|_{L^2}
\|e^{\frac{1}{2}ct\mathcal{H}^s}\mathcal{H}^{\frac{s}{2}-\frac{3}{4}-\frac{\alpha}{2}}\mathbb{S}_{m-2}g\|_{L^2}\\
&\qquad\qquad\qquad\times
\|e^{\frac{1}{2}ct\mathcal{H}^s}\mathcal{H}^{\frac{s}{2}-\frac{3}{4}-\frac{\alpha}{2}}\mathbb{S}_m h\|_{L^2}.
\end{align*}
This ends the proof of Proposition \ref{estimatetri}.
\end{proof}


\section{Estimates of the formal solutions}\label{S4}

In this Section, we study the convergence of the formal solutions obtained on Section \ref{S2} with small initial data in Shubin spaces.

\smallskip\noindent
{\bf The uniform estimate. } Let $\{g_n(t); n\in\mathbb{N}\}$ be the solution of \eqref{ODE-2},  then $  \mathbb{S}_Ng(t) \in\mathscr{S}_r(\mathbb{R}^3)$ for any $N\in \mathbb{N}$.
Multiplying $e^{c_0t(2n+\frac{3}{2})^s}(2n+\frac{3}{2})^{-\frac{3}{2}-\alpha}\overline{g_n}(t)$ on both sides of \eqref{ODE-1} with $c_0>0$ given in \eqref{eq:3.111} and take summation for $ 2\leq n\leq N$, then Proposition \ref{expansion} and the orthogonality of the basis $\{\varphi_{n}\}_{n\in \mathbb{N}}$ imply that
\begin{align*}
&\Big(\partial_t (\mathbb{S}_Ng)(t),e^{c_0t\mathcal{H}^s}\mathcal{H}^{-\frac{3}{2}-\alpha}\mathbb{S}_N g(t)\Big)_{L^2(\mathbb{R}^3)}\\
&\qquad\qquad+\Big(\mathcal{L}(\mathbb{S}_Ng)(t),e^{c_0t\mathcal{H}^s}\mathcal{H}^{-\frac{3}{2}-\alpha}\mathbb{S}_N g(t)\Big)_{L^2(\mathbb{R}^3)}\\
&=\Big(\Gamma((\mathbb{S}_Ng),(\mathbb{S}_Ng)), e^{c_0t\mathcal{H}^s}\mathcal{H}^{-\frac{3}{2}-\alpha}\mathbb{S}_N g(t)\Big)_{L^2(\mathbb{R}^3)}.
\end{align*}
Since
$$
\Big(\mathcal{L}(\mathbb{S}_Ng)(t),e^{c_0t\mathcal{H}^s}\mathcal{H}^{-\frac{3}{2}-\alpha}\mathbb{S}_N g(t)\Big)_{L^2(\mathbb{R}^3)}=
\|e^{\frac{c_0t}{2}\mathcal{H}^s}\mathcal{L}^{\frac 12}\mathcal{H}^{-\frac{3}{4}-\frac{\alpha}{2}}\mathbb{S}_Ng(t)\|^2_{L^2(\mathbb{R}^3)},
$$
we have
\begin{align*}
&\qquad\qquad\frac{1}{2}\frac{d}{dt}\|e^{\frac{1}{2}c_0t\mathcal{H}^s}\mathcal{H}^{-\frac{3}{4}-\frac{\alpha}{2}}\mathbb{S}_Ng(t)\|^2_{L^2}\\
&\quad-\frac{c_0}{2}\|e^{\frac{1}{2}c_0t\mathcal{H}^s}\mathcal{H}^{\frac{s}{2}-\frac{3}{4}-\frac{\alpha}{2}}\mathbb{S}_Ng(t)\|^2_{L^2(\mathbb{R}^3)}
+\|e^{\frac{1}{2}c_0t\mathcal{H}^s}\mathcal{L}^{\frac 12}\mathcal{H}^{-\frac{3}{4}-\frac{\alpha}{2}}\mathbb{S}_Ng(t)\|^2_{L^2(\mathbb{R}^3)}\\
&\qquad=\Big(\Gamma((\mathbb{S}_Ng),(\mathbb{S}_Ng)), e^{c_0t\mathcal{H}^s}\mathcal{H}^{-\frac{3}{2}-\alpha}\mathbb{S}_Ng(t)\Big)_{L^2}.
\end{align*}
It follows from the inequality \eqref{eq:3.111}
in Remark \ref{remark-decomp}, for $n\geq2$,
$$
 c_0 n^s\leq \lambda_{n}\leq\frac{1}{c_0} n^s
$$
and  Proposition \ref{estimatetri} that, for any $N\geq2$, $t\geq0$,
\begin{align}\label{ele}
&\frac{1}{2}\frac{d}{dt}\|e^{\frac{1}{2}c_0t\mathcal{H}^s}\mathcal{H}^{-\frac{3}{4}-\frac{\alpha}{2}}\mathbb{S}_Ng(t)\|^2_{L^2}
+\frac{1}{2}\|e^{\frac{1}{2}c_0t\mathcal{H}^s}\mathcal{L}^{\frac 12}\mathcal{H}^{-\frac{3}{4}-\frac{\alpha}{2}}\mathbb{S}_Ng(t)\|^2_{L^2(\mathbb{R}^3)}\nonumber\\
&\qquad\qquad \leq C\|e^{\frac{1}{2}c_0t\mathcal{H}^s}\mathcal{H}^{-\frac{3}{4}-\frac{\alpha}{2}}\mathbb{S}_{N-2}g\|_{L^2}
\|e^{\frac{1}{2}c_0t\mathcal{H}^s}\mathcal{L}^{\frac 12}\mathcal{H}^{-\frac{3}{4}-\frac{\alpha}{2}}\mathbb{S}_Ng\|^2_{L^2}.
\end{align}

\begin{proposition}\label{induction} Let $0<\alpha<2s<2$.
There exist  $\epsilon_ 0>0$ such that for all $g^0\in Q^{-\frac{3}{2}-\alpha}_r(\mathbb{R}^3) \cap \mathcal{N}^{\perp} $ with
$$
\|g^0\|^2_{Q^{-\frac{3}{2}-\alpha}_r(\mathbb{R}^3)}
=\sum^{+\infty}_{k=2}(2k+\frac{3}{2})^{-\frac{3}{2}-\alpha}g_k^2\leq\epsilon_0\, ,
$$
if $\{g_n(t); n\in\mathbb{N}\}$ is the solution of \eqref{ODE-2}, then, for any $t\geq0,\, N\geq 0$,
\begin{align}
\|e^{\frac{1}{2}c_0t\mathcal{H}^s}\mathcal{H}^{-\frac{3}{4}-\frac{\alpha}{2}}\mathbb{S}_Ng(t)\|^2_{L^2}
+\frac{1}{2}\int^t_0\|e^{\frac{1}{2}c_0\tau\mathcal{H}^s}\mathcal{L}^{\frac{1}{2}}\mathcal{H}^{-\frac{3}{4}-\frac{\alpha}{2}}
\mathbb{S}_{N}g(\tau)\|^2_{L^2}d\tau\label{est-1}\\
\le\|g^0\|^2_{Q^{-\frac{3}{2}-\alpha}(\mathbb{R}^3)},\notag
\end{align}
where $c_0>0$ is given in \eqref{eq:3.111}. We have also, for any $t\geq0,\, N\geq 0$ and $\gamma>1$,
\begin{equation}\label{est-2}
\|\mathbb{S}_N \Gamma	(\mathbb{S}_Ng(t), \mathbb{S}_Ng(t))\|_{Q^{-2s-\frac{3}{2}-\alpha-\gamma}(\mathbb{R}^3)}\le C_\gamma\|g^0\|^2_{Q^{-\frac{3}{2}-\alpha}(\mathbb{R}^3)},
\end{equation}
where the constant $C_\gamma>0$ depends only on $\gamma$.
\end{proposition}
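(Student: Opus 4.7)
The plan is to prove \eqref{est-1} by induction on $N$, leveraging the energy inequality \eqref{ele} together with the smallness of $\|g^0\|_{Q^{-\frac{3}{2}-\alpha}}$, and then to deduce \eqref{est-2} as a direct consequence of \eqref{est-1}, Proposition \ref{expansion}, and the kernel estimate \eqref{estimatesum-1} from Lemma \ref{sum}.

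For the base of the induction, observe that $g_0(t)\equiv g_1(t)\equiv 0$ by construction of the reduced system \eqref{ODE-2}, so $\mathbb{S}_{N-2}g(t)\equiv 0$ for every $N\leq 3$. This makes the right-hand side of \eqref{ele} vanish identically, and a direct integration from $0$ to $t$ yields \eqref{est-1} for such $N$. For the inductive step $N\geq 4$, assume \eqref{est-1} at level $N-2$, so that
$$
\|e^{\frac{1}{2}c_0t\mathcal{H}^s}\mathcal{H}^{-\frac{3}{4}-\frac{\alpha}{2}}\mathbb{S}_{N-2}g(t)\|_{L^2}\leq \|g^0\|_{Q^{-\frac{3}{2}-\alpha}}\leq \sqrt{\epsilon_0}
$$
for every $t\geq 0$. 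Inserting this into the right-hand side of \eqref{ele} and choosing $\epsilon_0$ small enough that $C\sqrt{\epsilon_0}\leq \tfrac{1}{4}$, where $C$ is the constant from Proposition \ref{estimatetri}, lets one absorb the nonlinear contribution into the dissipation term on the left and produces
$$
\frac{d}{dt}\|e^{\frac{1}{2}c_0t\mathcal{H}^s}\mathcal{H}^{-\frac{3}{4}-\frac{\alpha}{2}}\mathbb{S}_Ng(t)\|^2_{L^2}+\frac{1}{2}\|e^{\frac{1}{2}c_0t\mathcal{H}^s}\mathcal{L}^{\frac{1}{2}}\mathcal{H}^{-\frac{3}{4}-\frac{\alpha}{2}}\mathbb{S}_Ng(t)\|^2_{L^2}\leq 0.
$$
Integrating over $[0,t]$ and using $\|\mathbb{S}_Ng^0\|_{Q^{-\frac{3}{2}-\alpha}}\leq \|g^0\|_{Q^{-\frac{3}{2}-\alpha}}$ closes the induction and yields \eqref{est-1}.

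For \eqref{est-2}, the starting point is the algebraic identity
$$
\mathbb{S}_N\Gamma(\mathbb{S}_Ng,\mathbb{S}_Ng)=\sum_{n=4}^{N}\bigg(\sum_{\substack{k+l=n\\ 2\leq k,l\leq N}}\mu_{k,l}\,g_k(t)g_l(t)\bigg)\varphi_n,
$$
which follows from Proposition \ref{expansion} and the vanishing of $g_0,g_1$. I would compute the $Q^{-2s-\frac{3}{2}-\alpha-\gamma}$ norm by orthonormality of $\{\varphi_n\}$, split
$$
\mu_{k,l}g_kg_l=\big(\mu_{k,l}k^{\frac{3}{4}+\frac{\alpha}{2}}l^{\frac{3}{4}+\frac{\alpha}{2}}\big)\big(k^{-\frac{3}{4}-\frac{\alpha}{2}}g_k\big)\big(l^{-\frac{3}{4}-\frac{\alpha}{2}}g_l\big),
$$
and apply Cauchy--Schwarz to the inner sum. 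The weighted $\mu_{k,l}^2$ factor is controlled by $n^{2s+\frac{3}{2}+\alpha}$ via \eqref{estimatesum-1}, which cancels the prefactor $n^{-2s-\frac{3}{2}-\alpha-\gamma}$ coming from the norm and leaves
$$
\|\mathbb{S}_N\Gamma(\mathbb{S}_Ng,\mathbb{S}_Ng)\|^2_{Q^{-2s-\frac{3}{2}-\alpha-\gamma}}\lesssim \sum_{k,l\geq 2}(k+l)^{-\gamma}\,k^{-\frac{3}{2}-\alpha}l^{-\frac{3}{2}-\alpha}|g_k(t)|^2|g_l(t)|^2.
$$
The assumption $\gamma>1$ makes $\sum_{l\geq 2}(k+l)^{-\gamma}\leq C_\gamma$ uniformly in $k$, so the double sum is majorised by $C_\gamma\|g(t)\|^4_{Q^{-\frac{3}{2}-\alpha}}$, and applying \eqref{est-1} at the fixed time $t$ yields \eqref{est-2}.

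The only real obstacle is the bookkeeping at the inductive step: the constant $C$ in Proposition \ref{estimatetri} is inherited from Lemma \ref{sum} and ultimately depends on the shape of $\beta$, so one has to verify that the threshold $\epsilon_0$ can be fixed once and for all so that $C\sqrt{\epsilon_0}\leq \tfrac{1}{4}$ suffices to propagate the hypothesis uniformly in $N$. Once $\epsilon_0$ is chosen, the remaining calculation is a clean energy argument combined with the spectral decomposition developed in Section \ref{S2}.
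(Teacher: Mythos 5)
Your proposal is correct and follows essentially the same route as the paper: induction on $N$ built on the energy inequality \eqref{ele}, with the smallness of $\|g^0\|_{Q^{-\frac{3}{2}-\alpha}}$ used to absorb the nonlinear term into the dissipation (yielding \eqref{indu1} and then \eqref{est-1} by integration), and \eqref{est-2} obtained from the spectral expansion of $\Gamma$, Cauchy--Schwarz and \eqref{estimatesum-1} combined with \eqref{est-1}. The only cosmetic differences are that the paper handles the base case by solving $g_2(t)=e^{-\lambda_2 t}g_2(0)$ explicitly (using $c_0(\frac{11}{2})^s<\lambda_2$) and inducts from $N-1$ to $N$, whereas you note that the right-hand side of \eqref{ele} vanishes for $N\le 3$ and step from $N-2$ to $N$; both are valid, and your explicit requirement $C\sqrt{\epsilon_0}\le\frac{1}{4}$ is the correct quantitative version of the paper's choice of $\epsilon_0$.
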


\begin{proof} Set
$$
\mathcal{E}_t(\mathbb{S}_{N}g)
=\|e^{\frac{1}{2}c_0t\mathcal{H}^s}\mathcal{H}^{-\frac{3}{4}-\frac{\alpha}{2}}\mathbb{S}_Ng(t)\|^2_{L^2}
+\frac{1}{2}\int^t_0\|e^{\frac{1}{2}c_0\tau\mathcal{H}^s}\mathcal{L}^{\frac{1}{2}}\mathcal{H}^{-\frac{3}{4}-\frac{\alpha}{2}}
\mathbb{S}_{N}g(\tau)\|^2_{L^2}d\tau\, .
$$
We prove by induction on $N$.

{\bf 1). For $N\le 2$.}  It follows from \eqref{ODE-2} that
$$g_0(t)=g_1(t)=0,\quad\,g_2(t)=e^{-\lambda_{2}t}g_2(0).$$
Then
$$\mathcal{E}_t(\mathbb{S}_{0}g)=\mathcal{E}_t(\mathbb{S}_1g)=0,$$
and
\begin{align*}
&\mathcal{E}_t(\mathbb{S}_2g)\\
&=e^{c_0t(\frac{11}{2})^{s}}\left(\frac{11}{2}\right)^{-\frac{3}{2}-\alpha}|g_2(t)|^2
+\frac{1}{2}\int^t_0e^{c_0\tau(\frac{11}{2})^{s}}\lambda_2\left(\frac{11}{2}\right)^{-\frac{3}{2}-\alpha}|g_2(\tau)|^2d\tau\\
&=\left[e^{(c_0(\frac{11}{2})^{s}-2\lambda_{2})t}
+\frac{\lambda_2}{2}\int^t_0e^{(c_0(\frac{11}{2})^{s}-2\lambda_{2})\tau}d\tau\right]
\left(\frac{11}{2}\right)^{-\frac{3}{2}-\alpha}|g_2(0)|^2\\
&=\frac{\frac{\lambda_2}{2}}{2\lambda_2-c_0(\frac{11}{2})^s}\left(\frac{11}{2}\right)^{-\frac{3}{2}-\alpha}|g_2(0)|^2\\
&\qquad+\left(1-\frac{\frac{\lambda_2}{2}}{2\lambda_2-c_0(\frac{11}{2})^s}\right)
e^{(c_0(\frac{11}{2})^{s}-2\lambda_2)t}\left(\frac{11}{2}\right)^{-\frac{3}{2}-\alpha}|g_2(0)|^2\\
&\leq\left(\frac{11}{2}\right)^{-\frac{3}{2}-\alpha}|g_2(0)|^2\le \|g^0\|^2_{Q^{-\frac{3}{2}-\alpha}_r(\mathbb{R}^3)},
\end{align*}
where the inequality $c_0(\frac{11}{2})^{s}<\lambda_{2}$ is used in the final formula.

{\bf 2). For $N> 2$.} We want to prove that
$$
\mathcal{E}_t(\mathbb{S}_{N-1}g)
\le\|g^0\|^2_{Q^{-\frac{3}{2}-\alpha}_r(\mathbb{R}^3)}\leq\epsilon_0,
$$
imply
$$
\mathcal{E}_t(\mathbb{S}_{N}g)
\le\|g^0\|^2_{Q^{-\frac{3}{2}-\alpha}_r(\mathbb{R}^3)}\leq\epsilon_0.
$$

Taking now $\epsilon_0>0$ such that,
$$
0<\epsilon_0\leq\frac{1}{4 C}
$$
where $ C$ is defined in Proposition \ref{estimatetri}, we notice that
 $$
 \|e^{\frac{1}{2}c_0t\mathcal{H}^s}\mathcal{H}^{-\frac{3}{4}-\frac{\alpha}{2}}\mathbb{S}_{N-2}g\|^2_{L^2}
\leq\|e^{\frac{1}{2}c_0t\mathcal{H}^s}\mathcal{H}^{-\frac{3}{4}-\frac{\alpha}{2}}\mathbb{S}_{N-1}g\|^2_{L^2}\leq
\mathcal{E}_t(\mathbb{S}_{N-1}g)
\leq\epsilon_0.
$$
Then we deduce from \eqref{ele} that
\begin{align*}
&\frac{1}{2}\frac{d}{dt}\|e^{\frac{1}{2}c_0t\mathcal{H}^s}\mathcal{H}^{-\frac{3}{4}-\frac{\alpha}{2}}\mathbb{S}_Ng(t)\|^2_{L^2}
+\frac{1}{2}\|e^{\frac{1}{2}c_0t\mathcal{H}^s}\mathcal{L}^{\frac{1}{2}}
\mathcal{H}^{-\frac{3}{4}-\frac{\alpha}{2}}\mathbb{S}_Ng(t)\|^2_{L^2}\\
&\leq  C\|e^{\frac{1}{2}c_0t\mathcal{H}^s}\mathcal{H}^{-\frac{3}{4}-\frac{\alpha}{2}}\mathbb{S}_{N-2}g\|_{L^2}
\|e^{\frac{1}{2}c_0t\mathcal{H}^s}\mathcal{L}^{\frac{1}{2}}
\mathcal{H}^{-\frac{3}{4}-\frac{\alpha}{2}}\mathbb{S}_Ng\|^2_{L^2}\\
&\leq\frac{1}{4}\|e^{\frac{1}{2}c_0t\mathcal{H}^s}\mathcal{L}^{\frac{1}{2}}
\mathcal{H}^{-\frac{3}{4}-\frac{\alpha}{2}}\mathbb{S}_Ng\|^2_{L^2},
\end{align*}
therefore,
\begin{equation}\label{indu1}
\frac{d}{dt}\|e^{\frac{1}{2}c_0t\mathcal{H}^s}\mathcal{H}^{-\frac{3}{4}-\frac{\alpha}{2}}\mathbb{S}_Ng(t)\|^2_{L^2}
+\frac{1}{2}\|e^{\frac{1}{2}c_0t\mathcal{H}^s}\mathcal{L}^{\frac{1}{2}}
\mathcal{H}^{-\frac{3}{4}-\frac{\alpha}{2}}\mathbb{S}_Ng(t)\|^2_{L^2}\le 0.
\end{equation}
This ends the proof of \eqref{est-1}.

For the estimate \eqref{est-2}, using the Proposition \ref{expansion} and \eqref{estimatesum-1}, we have
\begin{align*}
&\|\mathbb{S}_N \Gamma	(\mathbb{S}_Ng(t), \mathbb{S}_Ng(t))\|^2_{Q^{-2s-\frac{3}{2}-\alpha-\gamma}(\mathbb{R}^3)}\\
=&\sum^{N}_{n=4}(2n+\frac{3}{2})^{-2s-\frac{3}{2}-\alpha-\gamma}\Big|\sum_{\substack{k+l=n\\k\geq2,l\geq2}}\mu_{k,l}g_k(t)g_l(t)\Big|^2\\
\le &\sum^{N}_{n=4}(2n+\frac{3}{2})^{-2s-\frac{3}{2}-\alpha-\gamma}
\sum_{\substack{k+l=n\\k\geq2,l\geq2}}\frac{|\mu_{k,l}|^2}{l^{-\frac{3}{2}-\alpha}k^{-\frac{3}{2}-\alpha}}
\\
&\qquad\qquad\times \sum_{\substack{k+l=n\\k\geq2,l\geq2}}l^{-\frac{3}{2}-\alpha}k^{-\frac{3}{2}-\alpha} |g_k(t)g_l(t)|^2\\
\le &\sum^{N}_{n=4}(2n+\frac{3}{2})^{-\gamma}\|\mathcal{H}^{-\frac{3}{4}-\frac{\alpha}{2}}\mathbb{S}_Ng(t)\|^4_{L^2}.
\end{align*}
Then \eqref{est-1} imply,  for any $t\geq0,\, N\geq 0$,
\begin{equation*}
\|\mathbb{S}_N \Gamma	(\mathbb{S}_Ng(t), \mathbb{S}_Ng(t))\|^2_{Q^{-2s-\frac{3}{2}-\alpha-\gamma}(\mathbb{R}^3)}\le C_\gamma\|g^0\|^4_{Q^{-\frac{3}{2}-\alpha}(\mathbb{R}^3)},
\end{equation*}
with ($\gamma>1$)
$$
C_\gamma= \sum^{+\infty}_{n=4}(2n+\frac{3}{2})^{-\gamma},
$$
which ended the proof of the Proposition \ref{induction}.
\end{proof}

Using the exact the same proof, we get the following surprise results
\begin{corollary}\label{cor1}
Let $0<\alpha<2s<2$,  then for any $f, g \in Q^{-\frac{3}{2}-\alpha}(\mathbb{R}^3)$, we have 
\begin{equation}\label{tri}
\Gamma(f,\, g)\in Q^{-2s-\frac{3}{2}-\alpha-\gamma}(\mathbb{R}^3)	
\end{equation}
for any $\gamma>1$, and 
\begin{equation*}
\|\Gamma(f, \, g)\|_{Q^{-2s-\frac{3}{2}-\alpha-\gamma}(\mathbb{R}^3)}\le C_\gamma\|f\|_{Q^{-\frac{3}{2}-\alpha}(\mathbb{R}^3)}
\|g\|_{Q^{-\frac{3}{2}-\alpha}(\mathbb{R}^3)}\, ,
\end{equation*}
with $C_\gamma>$ a constant depends only on $\gamma$.
\end{corollary}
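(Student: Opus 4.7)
The plan is to mirror the derivation of estimate \eqref{est-2} in the proof of Proposition \ref{induction}, dropping the truncation $\mathbb{S}_N$ and the diagonal constraint $f=g$. Expanding $f=\sum_{k\ge 0}f_k\varphi_k$ and $g=\sum_{l\ge 0}g_l\varphi_l$, Proposition \ref{expansion} decomposes $\Gamma(f,g)$ into a \emph{main} piece $\sum_{n,m\ge 1}f_n g_m\mu_{n,m}\varphi_{n+m}$ plus two \emph{boundary} pieces $f_0\sum_m g_m c_m\varphi_m$ and $g_0\sum_n f_n d_n\varphi_n$, where $c_m$ and $d_n$ are the coefficients from $(i_1)$ and $(i_2)$ of Proposition \ref{expansion}. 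By Remark \ref{remark-decomp} and the bound $\lambda_n\lesssim n^s$ of \eqref{eq:3.111}, one has $|c_m|+|d_m|\lesssim m^s$.

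For the main piece I would apply Cauchy--Schwarz exactly as in the derivation of \eqref{est-2}:
\begin{equation*}
\Big|\sum_{\substack{k+l=n\\k,l\ge 2}}\mu_{k,l}f_k g_l\Big|^2\le\Big(\sum_{\substack{k+l=n\\k,l\ge 2}}\frac{|\mu_{k,l}|^2}{k^{-\frac{3}{2}-\alpha}l^{-\frac{3}{2}-\alpha}}\Big)\Big(\sum_{\substack{k+l=n\\k,l\ge 2}}k^{-\frac{3}{2}-\alpha}l^{-\frac{3}{2}-\alpha}|f_k|^2|g_l|^2\Big).
\end{equation*}
Lemma \ref{sum}, inequality \eqref{estimatesum-1}, bounds the first factor by $n^{2s+\frac{3}{2}+\alpha}$, while the second factor is uniformly bounded in $n$ by $\|f\|^2_{Q^{-\frac{3}{2}-\alpha}}\|g\|^2_{Q^{-\frac{3}{2}-\alpha}}$. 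Weighting by $(2n+\frac{3}{2})^{-2s-\frac{3}{2}-\alpha-\gamma}$ and summing over $n\ge 4$ leaves $\sum_n n^{-\gamma}$, which is a finite constant $C_\gamma$ precisely because $\gamma>1$.

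For the two boundary pieces I would use the trivial bounds $|f_0|\lesssim\|f\|_{Q^{-\frac{3}{2}-\alpha}}$ and $|g_0|\lesssim\|g\|_{Q^{-\frac{3}{2}-\alpha}}$ (immediate from the definition of the Shubin norm) together with $|c_m|\lesssim m^s$; a typical contribution then becomes
\begin{equation*}
|f_0|^2\sum_{m\ge 0}(2m+\tfrac{3}{2})^{-2s-\frac{3}{2}-\alpha-\gamma}|c_m|^2|g_m|^2\lesssim\|f\|^2_{Q^{-\frac{3}{2}-\alpha}}\sum_{m\ge 0}m^{-\frac{3}{2}-\alpha-\gamma}|g_m|^2\le\|f\|^2_{Q^{-\frac{3}{2}-\alpha}}\|g\|^2_{Q^{-\frac{3}{2}-\alpha}},
\end{equation*}
and similarly for the $g_0$ term. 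Summing the three contributions yields the desired inequality with a constant proportional to $\sum_{n\ge 4}(2n+\frac{3}{2})^{-\gamma}$. The only subtlety, which the proof of \eqref{est-2} bypassed through the hypothesis $g^0\in\mathcal{N}^\perp$, is the treatment of the $\varphi_0$ couplings; the $n^s$ growth of $|c_m|,|d_n|$ is exactly compensated by the extra $\gamma>1$ room in the target Shubin index, so no new analytic input beyond Lemma \ref{sum} is required.
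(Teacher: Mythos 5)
Your proposal follows essentially the same route as the paper: the paper's justification of Corollary \ref{cor1} is literally ``the same proof'' as estimate \eqref{est-2} in Proposition \ref{induction}, i.e.\ Cauchy--Schwarz on $\sum_{k+l=n}\mu_{k,l}f_kg_l$ with the weights $k^{-\frac32-\alpha}l^{-\frac32-\alpha}$, the bound \eqref{estimatesum-1} of Lemma \ref{sum}, and the convergence of $\sum_n(2n+\frac32)^{-\gamma}$ for $\gamma>1$. In fact you are more careful than the paper's one-line claim, since for general $f,g\in Q^{-\frac32-\alpha}_r$ (the statement should really be read for radial functions, as the expansion in $\{\varphi_n\}$ presupposes) the couplings with $\varphi_0$ do appear, and your treatment of them via $|f_0|,|g_0|\lesssim\|\cdot\|_{Q^{-\frac32-\alpha}}$ and $|c_m|,|d_m|\lesssim m^s$ is correct; note only that Remark \ref{remark-decomp} gives $c_m+d_m=-\lambda_m$, so to bound each coefficient separately you should add the elementary observation that $d_m=\int_0^{\pi/4}\beta(\theta)(\sin\theta)^{2m}d\theta$ is bounded uniformly in $m\ge1$ (the integrand is integrable since $2m>2s$), whence $|c_m|\le\lambda_m+|d_m|\lesssim m^s$.

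The one loose end is the $\varphi_1$ couplings: you declare the main piece to be $\sum_{n,m\ge1}f_ng_m\mu_{n,m}\varphi_{n+m}$, but then estimate only the part with $k,l\ge2$, which is exactly the range in which Lemma \ref{sum} is stated; the terms with $k=1$ or $l=1$ are covered neither by \eqref{estimatesum-1} nor by your boundary analysis. This is not a failure of the method: for $k=1$ one has $\mu_{1,l}\approx l\int_0^{\pi/4}\theta^{1-2s}(\cos\theta)^{2l}d\theta\lesssim l\cdot l^{s-1}=l^{s}$ (equivalently, the pointwise bound $\mu_{k,l}\lesssim(k+l)^{\frac14+s}l^{-\frac14}k^{-\frac54-s}$ from the proof of Lemma \ref{sum} extends to $k=1$ or $l=1$), so this single row and column of the double sum is absorbed exactly like your $\varphi_0$ boundary terms, using only $\gamma>0$. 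With that sentence added, your argument is complete and coincides in substance with the paper's.
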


\smallskip
\noindent
{\bf Convergence in Shubin space.}\,\, We prove now the convergence of the sequence
$$
g(t)=\sum^{+\infty}_{n=2} g_{n}(t)\varphi_{n}
$$
defined in \eqref{ODE}.  For all $N\geq2$,
$\mathbb{S}_{N}g(t)$ satisfies the following Cauchy problem
\begin{equation}\label{eq-2}
\left\{ \begin{aligned}
         &\partial_t \mathbb{S}_{N}g+\mathcal{L}(\mathbb{S}_{N}g)=\mathbb{S}_{N}\Gamma(\mathbb{S}_{N}g, \mathbb{S}_{N}g),\,\\
         &\mathbb{S}_{N}g|_{t=0}=\sum^{N}_{n=2}\langle g^0,\varphi_{n}\rangle\varphi_{n}.
\end{aligned} \right.
\end{equation}
By Proposition \ref{induction} and the orthogonality of the basis $(\varphi_{n})_{n\in\mathbb{N}}$, for all $t>0$,
\begin{align*}
&\sum^{N}_{n=2}e^{c_0t(2n+\frac{3}{2})^s}(2n+\frac{3}{2})^{-\frac{3}{4}-\alpha}|g_n(t)|^2\\
&+\frac{1}{2}\int^t_0\left(\sum^{N}_{n=2}e^{c_0t(2n+\frac{3}{2})^s}\lambda_n(2n+\frac{3}{2})^{-\frac{3}{4}-\alpha}|g_n(\tau)|^2\right)d\tau
\le\|g^0\|^2_{Q^{-\frac{3}{2}-\alpha}_r(\mathbb{R}^3)}.
\end{align*}
It follows that for all $t\geq0$,
\begin{align}\label{Lestimate}
&\sum^{N}_{n=2}(2n+\frac{3}{2})^{-\frac{3}{2}-\alpha}|g_n(t)|^2
\nonumber\\
&+\frac{1}{2}\int^t_0\left(\sum^{N}_{n=2}\lambda_n(2n+\frac{3}{2})^{-\frac{3}{2}-\alpha}|g_n(\tau)|^2\right)d\tau
\le\|g^0\|^2_{Q^{-\frac{3}{2}-\alpha}_r(\mathbb{R}^3)}.
\end{align}
The orthogonality of the basis $(\varphi_{n})_{n\in\mathbb{N}}$ implies that
$$
\|\mathbb{S}_{N}g(t)\|^2_{Q^{-\frac{3}{2}-\alpha}_r(\mathbb{R}^3)}=\|\mathcal{H}^{-\frac{3}{4}-\frac{\alpha}{2}}\mathbb{S}_{N}g(t)\|^2_{L^2(\mathbb{R}^3)}=
\sum^{N}_{n=2}(2n+\frac{3}{2})^{-\frac{3}{2}-\alpha}|g_n(t)|^2$$
$$\|\mathcal{L}^{\frac{1}{2}}\mathcal{H}^{-\frac{3}{4}-\frac{\alpha}{2}}\mathbb{S}_{N}g(t)\|^2_{L^2(\mathbb{R}^3)}=
\sum^{N}_{n=2}\lambda_n(2n+\frac{3}{2})^{-\frac{3}{2}-\alpha}|g_n(t)|^2.
$$
By using the monotone convergence theorem, we have
$$
\mathbb{S}_{N}g\,\, \to\,\, g(t)=\sum^{+\infty}_{n=2}\,\,
g_n(t)\varphi_{n}\,\,\, \mbox{in} \,\,\, Q^{-\frac{3}{2}-\alpha}_r(\mathbb{R}^3).
$$
Moreover, for any $T> 0$,
\begin{equation}\label{convergence1}
\lim_{N\to \infty}\|\mathbb{S}_{N}g-g\|_{L^\infty([0, T]; Q^{-\frac{3}{2}-\alpha}_r(\mathbb{R}^3))}=0
\end{equation}
and
\begin{equation}\label{convergence2}
\lim_{N\to \infty}\|\mathcal{H}^{\frac{s}{2}}(\mathbb{S}_{N}g-g)\|_{L^2([0, T]; Q^{-\frac{3}{2}-\alpha}_r(\mathbb{R}^3))}=0.
\end{equation}
On the other hand, using  \eqref{estimatesum-1} and \eqref{est-2}, we have also
\begin{equation}\label{Gamma}
\mathbb{S}_{N}\Gamma(\mathbb{S}_{N}g, \mathbb{S}_{N}g)
\,\,\to\,\, \Gamma(g, g)=\sum^{+\infty}_{n=4}\Big(\sum_{\substack{k+l=n\\k\geq2,l\geq2}}\mu_{k,l}f_kg_l\Big)\varphi_n,
\end{equation}
in $Q^{-2s-\frac{3}{2}-\alpha-\gamma}_r(\mathbb{R}^3)$.


\section{The proof of the main Theorem}\label{S5}

We recall the definition of weak solution of \eqref{eq-1}:

\begin{definition}
Let $g^0\in \mathcal{S}'(\mathbb{R}^3)$,  $g(t, v)$ is called a weak solution of the Cauchy problem \eqref{eq-1} if it satisfies the following conditions:
\begin{align*}
&g\in C^0([0, +\infty[; \mathcal{S}'(\mathbb{R}^3)), \quad g(0, v)=g^0(v),\\
&\mathcal{L} ( g )\in L^2([0,  T[; \, \mathcal{S}'(\mathbb{R}^3)),\quad
\Gamma (g , g)\in L^2([0,  T[; \mathcal{S}'(\mathbb{R}^3)),\quad\forall T>0,\\
&\langle g(t), \phi(t)\rangle-\langle g^0, \phi(0)\rangle+\int^t_{0}\langle\mathcal{L}g(\tau), \phi(\tau)\rangle d\tau\\
&\qquad\qquad\qquad\qquad=\int^t_{0}\langle\Gamma(g(\tau),g(\tau)), \phi(\tau)\rangle d\tau,\quad \forall t\ge 0,	
\end{align*}
For any $\phi(t)\in\,C^1\big([0, +\infty[; \mathscr{S}(\mathbb{R}^3)\big)$.
\end{definition}

We prove now the main Theorem \ref{trick}.

\smallskip\noindent
{\bf Existence of weak solution.}

Let $\{ g_n, n\ge 2\}$ be the solutions of the infinite system \eqref{ODE-2} with the initial datum give in the Proposition \ref{induction}, then for any $N\ge 2$, $\mathbb{S}_N g$ satisfy the equation \eqref{eq-2}.

We have, firstly, from the Proposition \ref{induction},  for any $N\ge 2, \gamma>1$, there exists positive constant $C>0$,
\begin{align*}
&\|\mathbb{S}_{N}g\|_{L^\infty[0, +\infty[;  Q^{-\frac{3}{2}-\alpha}_r(\mathbb{R}^3))}
\le \|g^0\|_{Q^{-\frac{3}{2}-\alpha}_r(\mathbb{R}^3)}\, ,\\
&\|\mathbb{S}_{N}\mathcal{L} ( g )\|_{L^\infty[0, +\infty[;  Q^{-2s-\frac{3}{2}-\alpha}_r(\mathbb{R}^3))}
\le C\|g^0\|_{Q^{-\frac{3}{2}-\alpha}_r(\mathbb{R}^3)}\, ,\\
&\|\mathbb{S}_{N}\Gamma(\mathbb{S}_{N}g, \mathbb{S}_{N}g)\|_{L^\infty[0, +\infty[;  Q^{-2s-\frac{3}{2}-\alpha-\gamma}_r(\mathbb{R}^3))}
\le C\|g^0\|^2_{Q^{-\frac{3}{2}-\alpha}_r(\mathbb{R}^3)}.	
\end{align*}
So that  the equation \eqref{eq-2} imply that the sequence $\{\frac{d}{dt} \mathbb{S}_N g(t)\}$ is uniformly bounded  in $  Q^{-2s-\frac{3}{2}-\alpha-\gamma}_r(\mathbb{R}^3)$ with respect to $N\in\mathbb{N}$ and $t\in [0, +\infty[$, so the Arzela-Ascoli Theorem imply that
$$
 \mathbb{S}_N g \,\, \to\,\, g\in C^0([0, +\infty[;   Q^{-2s-\frac{3}{2}-\alpha-\gamma}_r(\mathbb{R}^3))
 \subset C^0([0, +\infty[; \mathcal{S}'(\mathbb{R}^3)),
$$
and
$$
g(0)=g^0.
$$
Secondly,  for any $\phi(t)\in\,C^1\Big(\mathbb{R}_+,\mathscr{S}(\mathbb{R}^3)\Big)$,  we have, for any $t>0$,
\begin{align*}
&\Big|\int^t_0\langle\mathbb{S}_{N}\Gamma(\mathbb{S}_{N}g, \mathbb{S}_{N}g)
-\Gamma(g, g),
\phi(\tau)\rangle\,d\tau\Big|\\
&\le\Big|\int^t_0\langle\Gamma(\mathbb{S}_{N}g, \mathbb{S}_{N}g),\mathbb{S}_{N}\phi(\tau)-\phi(\tau)\rangle\,d\tau\Big|\\
&\qquad+\Big|\int^t_0\langle\Gamma(\mathbb{S}_{N}g-g, \mathbb{S}_{N}g),\phi(\tau)\rangle\,d\tau\Big|+\Big|\int^t_0\langle\Gamma(g, \mathbb{S}_{N}g-g),\phi(\tau)\rangle\,d\tau\Big|.
\end{align*}
By using the estimate \eqref{Lestimate}, the first inequality of Proposition \ref{estimatetri} and the inequality \eqref{eq:3.111} in Remark \ref{remark-decomp}, one can verify that,
\begin{align*}
&\Big|\int^t_0\langle\mathbb{S}_{N}\Gamma(\mathbb{S}_{N}g, \mathbb{S}_{N}g)-\Gamma(g, g),
\phi(\tau)\rangle\,d\tau\Big|\\
&\leq\,C\int^t_0\|\mathbb{S}_{N}g\|_{Q^{-\frac{3}{2}-\alpha}(\mathbb{R}^3)}\|\mathcal{H}^{\frac s2}\mathbb{S}_{N}g\|_{Q^{-\frac{3}{2}-\alpha}(\mathbb{R}^3)} \Big\|\mathcal{H}^{\frac s2}(\mathbb{S}_{N}\phi-\phi)\Big\|_{Q^{\frac{3}{2}+\alpha}(\mathbb{R}^3)} dt\\
&\quad+C\int^t_0 \|\mathbb{S}_{N}g-g\|_{Q^{-\frac{3}{2}-\alpha}(\mathbb{R}^3)}\|\mathcal{H}^{\frac s2}\mathbb{S}_{N}g\|_{Q^{-\frac{3}{2}-\alpha}(\mathbb{R}^3)} \|\mathcal{H}^{\frac s2}\phi\|_{Q^{\frac{3}{2}+\alpha}(\mathbb{R}^3)}
dt\\
&\quad+C\int^t_0\|g\|_{Q^{-\frac{3}{2}-\alpha}(\mathbb{R}^3)}
\|\mathcal{H}^{\frac s2}(\mathbb{S}_{N}g-g)\|_{Q^{-\frac{3}{2}-\alpha}(\mathbb{R}^3)}
\|\mathcal{H}^{\frac s2}\phi\|_{Q^{\frac{3}{2}+\alpha}(\mathbb{R}^3)}
dt\\
&\leq\,C\|g_0\|^2_{Q^{-\frac{3}{2}-\alpha}(\mathbb{R}^3)} \Big\|\mathcal{H}^{\frac s2}(
\mathbb{S}_{N}\phi-\phi)\Big\|_{L^2(]0, t[; Q^{\frac{3}{2}+\alpha}(\mathbb{R}^3))} \\
&\quad+C\|\mathbb{S}_{N}g-g\|_{L^\infty([0, t]; Q^{-\frac{3}{2}-\alpha}(\mathbb{R}^3))}\|g_0\|_{Q^{-\frac{3}{2}-\alpha}(\mathbb{R}^3)} \|\mathcal{H}^{\frac s2}\phi\|_{L^2(]0, t[; Q^{\frac{3}{2}+\alpha}(\mathbb{R}^3))}\\
&\quad+C\|g_0\|_{Q^{-\frac{3}{2}-\alpha}(\mathbb{R}^3)}\|\mathcal{H}^{\frac s2}(\mathbb{S}_{N}g-g)\|_{L^2([0, t]; Q^{-\frac{3}{2}-\alpha}(\mathbb{R}^3))}\|\mathcal{H}^{\frac s2}\phi\|_{L^2(]0, t[; Q^{\frac{3}{2}+\alpha}(\mathbb{R}^3))}.
\end{align*}
Then we can deduce from \eqref{convergence1} and \eqref{convergence2} that, $N\rightarrow+\infty$,
$$\int^t_0\langle\mathbb{S}_{N}\Gamma(\mathbb{S}_{N}g, \mathbb{S}_{N}g)-\Gamma(g, g),
\phi(\tau)\rangle \,d\tau\rightarrow0.$$
For any $\phi(t)\in\,C^1\Big(\mathbb{R}_+,\mathscr{S}(\mathbb{R}^3)\Big)$, the Cauchy problem \eqref{eq-2} can be rewrite as follows
\begin{align*}
&\langle \mathbb{S}_{N}g(t), \phi(t)\rangle-\langle \mathbb{S}_{N}g(0), \phi(0)\rangle\\
&=-\int^t_{0}\langle\mathcal{L}\mathbb{S}_{N}g(\tau), \phi(\tau)\rangle d\tau+\int^t_{0}\langle\mathbb{S}_{N}\Gamma(\mathbb{S}_{N}g(\tau),\mathbb{S}_{N}g(\tau)), \phi(\tau)\rangle d\tau
\end{align*}
Let $N\rightarrow+\infty$, we conclude that,
\begin{align*}
&\langle g(t), \phi(t)\rangle-\langle g^0, \phi(0)\rangle\\
&=-\int^t_{0}\langle\mathcal{L}g(\tau), \phi(\tau)\rangle d\tau+\int^t_{0}\langle\Gamma(g(\tau),g(\tau)), \phi(\tau)\rangle d\tau,
\end{align*}
which shows $g\in L^\infty([0, +\infty[; Q^{-\frac{3}{2}-\alpha}(\mathbb{R}^3))$ is a global weak solution of Cauchy problem \eqref{eq-1}.

\smallskip\noindent
{\bf Regularity of the solution.}\,\, For $\mathbb{S}_{N}g$ defined in \eqref{SN}, where $N\ge2$, since
$$
\lambda_{n}\geq\lambda_{2}>0, \, \forall\,n\geq2,
$$
we deduce from the formulas  \eqref{indu1} and the orthogonality of the basis $(\varphi_n)_{n\in\mathbb{N}}$ that
\begin{align*}
&\frac{d}{dt}\|e^{\frac{1}{2}c_0t\mathcal{H}^s}\mathcal{H}^{-\frac{3}{4}-\frac{\alpha}{2}}\mathbb{S}_Ng(t)\|^2_{L^2}
+\frac{\lambda_2}{2}\|e^{\frac{1}{2}c_0t\mathcal{H}^s}\mathcal{H}^{-\frac{3}{4}-\frac{\alpha}{2}}\mathbb{S}_Ng(t)\|^2_{L^2}\\
&\leq\frac{d}{dt}\|e^{\frac{1}{2}c_0t\mathcal{H}^s}\mathcal{H}^{-\frac{3}{4}-\frac{\alpha}{2}}\mathbb{S}_Ng(t)\|^2_{L^2}
+\frac{1}{2}\sum^{N}_{n=2}e^{c_0t(2n+\frac{3}{2})^{s}}\lambda_n(2n+\frac{3}{2})^{-\frac{3}{2}-\alpha}|g_n(t)|^2\\
&= \frac{d}{dt}\|e^{\frac{1}{2}c_0t\mathcal{H}^s}\mathcal{H}^{-\frac{3}{4}-\frac{\alpha}{2}}\mathbb{S}_Ng(t)\|^2_{L^2}
+\frac{1}{2}\|e^{\frac{1}{2}c_0t\mathcal{H}^s}\mathcal{L}^{\frac{1}{2}}\mathcal{H}^{-\frac{3}{4}-\frac{\alpha}{2}}\mathbb{S}_Ng(t)\|^2_{L^2(\mathbb{R}^3)}\le 0.
\end{align*}
We have then
$$
\frac{d}{dt}\Big(e^{\frac{\lambda_2t}{2}}\|e^{\frac{1}{2}c_0t\mathcal{H}^s}\mathcal{H}^{-\frac{3}{4}-\frac{\alpha}{2}}\mathbb{S}_Ng(t)\|^2_{L^2}\Big)
\leq 0,
$$
it deduces that for any $t>0$, and $N\in\mathbb{N}$,
\begin{equation*}
\|e^{\frac{1}{2}c_0t\mathcal{H}^s}\mathcal{H}^{-\frac{3}{4}-\frac{\alpha}{2}}\mathbb{S}_Ng(t)\|_{L^2(\mathbb{R}^3)}
\le\,e^{-\frac{\lambda_2t}{4}}\|\mathcal{H}^{-\frac{3}{4}-\frac{\alpha}{2}}g_0\|_{L^2(\mathbb{R}^3)}
=e^{-\frac{\lambda_2t}{4}}\|g_0\|_{Q^{-\frac{3}{2}-\alpha}}.
\end{equation*}
By using the monotone convergence theorem, we conclude that, there exists a constant $c_0>0$, such that
\begin{equation*}
\|e^{\frac{1}{2}c_0t\mathcal{H}^s}\mathcal{H}^{-\frac{3}{4}-\frac{\alpha}{2}}g(t)\|_{L^2(\mathbb{R}^3)}
\le e^{-\frac{\lambda_2t}{4}}\|g_0\|_{Q^{-\frac{3}{2}-\alpha}(\mathbb{R}^3)}.
\end{equation*}
The proof of Theorem \ref{trick} is completed.



\section{Appendix}\label{appendix}

The important known results but really needed for this paper are presented in this section. For the self-content of paper,
we will present some proof of those properties.

\smallskip\noindent
{\bf Gelfand-Shilov spaces.}  The symmetric Gelfand-Shilov space $S^{\nu}_{\nu}(\mathbb{R}^3)$ can be characterized through the decomposition
into the Hermite basis $\{H_{\alpha}\}_{\alpha\in\mathbb{N}^3}$ and the harmonic oscillator $\mathcal{H}=-\triangle +\frac{|v|^2}{4}$.
{\color{black} For} more details, see Theorem 2.1 in the book \cite{GPR}
\begin{align*}
f\in S^{\nu}_{\nu}(\mathbb{R}^3)&\Leftrightarrow\,f\in C^\infty (\mathbb{R}^3),\exists\, \tau>0, \|e^{\tau\mathcal{H}^{\frac{1}{2\nu}}}f\|_{L^2}<+\infty;\\
&\Leftrightarrow\, f\in\,L^2(\mathbb{R}^3),\exists\, \epsilon_0>0,\,\,\Big\|\Big(e^{\epsilon_0|\alpha|^{\frac{1}{2\nu}}}(f,\,H_{\alpha})_{L^2}\Big)_{\alpha\in\mathbb{N}^3}\Big\|_{l^2}<+\infty;\\
&\Leftrightarrow\,\exists\,C>0,\,A>0,\,\,\|(-\triangle +\frac{|v|^2}{4})^{\frac{k}{2}}f\|_{L^2(\mathbb{R}^3)}\leq AC^k(k!)^{\nu},\,\,\,k\in\mathbb{N}
\end{align*}
where
$$H_{\alpha}(v)=H_{\alpha_1}(v_1)H_{\alpha_2}(v_2)H_{\alpha_3}(v_3),\,\,\alpha\in\mathbb{N}^3,$$
and for $x\in\mathbb{R}$,
$$H_{n}(x)=\frac{(-1)^n}{\sqrt{2^nn!\pi}}e^{\frac{x^2}{2}}\frac{d^n}{dx^n}(e^{-x^2})
=\frac{1}{\sqrt{2^nn!\pi}}\Big(x-\frac{d}{dx}\Big)^n(e^{-\frac{x^2}{2}}).$$
For the harmonic oscillator $\mathcal{H}=-\triangle +\frac{|v|^2}{4}$ of 3-dimension and $s>0$, we have
$$
\mathcal{H}^{\frac{k}{2}} H_{\alpha} = (\lambda_{\alpha})^{\frac{k}{2}}H_{\alpha},\,\, \lambda_{\alpha}=\sum^3_{j=1}(\alpha_j+\frac{1}{2}),\,\,k\in\mathbb{N},\,\alpha\in\mathbb{N}^3.
$$

\smallskip\noindent
{\bf Shubin spaces.} We refer the reader to the works \cite{GPR, Shubin} for the Shubin spaces.
Let $\tau\in\mathbb{R}$, The Shubin spaces $Q^{\tau}(\mathbb{R}^3)$
can be also characterized through the decomposition into the Hermite basis :
\begin{align*}
f\in Q^{\tau}(\mathbb{R}^3)
&\Leftrightarrow\,f\in\,\mathcal{S}'(\mathbb{R}^3),\,\,
\Bigl\|\Bigl(\mathcal{H}+1\Bigr)^{\frac{\tau}{2}} \, f\Bigr\|_{L^2}<+\infty;
\\
&\Leftrightarrow\, f\in\,\mathcal{S}'(\mathbb{R}^3),\,\,
\Big\|\Big(
(|\alpha|+\frac{5}{2})^{\tau/2} (f,\,H_{\alpha})_{L^2}
\Big)_{\alpha\in\mathbb{N}^3}\Big\|_{l^2}<+\infty,
\end{align*}
where $|\alpha|=\alpha_1+\alpha_2+\alpha_3$,
$$H_{\alpha}(v)=H_{\alpha_1}(v_1)H_{\alpha_2}(v_2)H_{\alpha_3}(v_3),\,\,\alpha\in\mathbb{N}^3,$$
and for $x\in\mathbb{R}$, $n\in \mathbb{N}$,
$$
H_{n}(x)=\frac{1}{(2\pi)^{\frac{1}{4}}}\frac{1}{\sqrt{n!}}\Big(\frac{x}{2}-\frac{d}{dx}\Big)^n(e^{-\frac{x^2}{4}}).
$$
Thus, we have
\begin{equation}\label{dual}
Q^{-\tau}(\mathbb{R}^3)=\left(Q^{\tau}(\mathbb{R}^3)\right)'	
\end{equation}
where $\left(Q^{\tau}(\mathbb{R}^3)\right)'$ is the dual space of $Q^{\tau}(\mathbb{R}^3)$.

The following proof is based on the Appendix in \cite{MPX}.
\begin{proof}
Setting $$A_{\pm,j}=\frac{v_j}{2}\mp\frac{d}{dv_j},\quad j=1,2,3,$$
we have, for $\alpha\in\mathbb{N}^3, v\in\mathbb{R}^3,$
$$H_{\alpha}(v)=\frac{1}{\sqrt{\alpha_1!\alpha_2!\alpha_3!}}A^{\alpha_1}_{+,1}H_0(v_1)A^{\alpha_2}_{+,2}H_0(v_2)A^{\alpha_3}_{+,3}H_0(v_3),$$
and for j=1,2,3,
$$A_{+,j}H_{\alpha}=\sqrt{\alpha_j+1}H_{\alpha+e_j},\quad A_{-,j}H_{\alpha}=\sqrt{\alpha_j}H_{\alpha-e_j}(=0\,\text{if}\,\alpha_j=0)$$
where $(e_1,e_2,e_3)$ stands for the canonical basis of $\mathbb{R}^3$.
For the harmonic oscillator $\mathcal{H}=-\triangle +\frac{|v|^2}{4}$ of 3-dimension and $s>0$, one can verify that,
$$\mathcal{H}=\frac{1}{2}\sum^3_{j=1}(A_{+,j}A_{-,j}+A_{-,j}A_{+,j}).$$
Therefore, we have
\begin{align*}
\mathcal{H}H_{\alpha}
&=\frac{1}{2}\sum^3_{j=1}(A_{+,j}A_{-,j}+A_{-,j}A_{+,j})H_{\alpha}\\
&=\frac{1}{2}[\sum^3_{j=1}\sqrt{\alpha_j}A_{+,j}H_{\alpha-e_j}+\sum^3_{j=1}\sqrt{\alpha_j+1}A_{-,j}H_{\alpha+e_j}]\\
&=\frac{1}{2}\sum^3_{j=1}(2\alpha_j+1)H_{\alpha}=\sum^3_{j=1}(\alpha_j+\frac{1}{2})H_{\alpha}.
\end{align*}
By using this spectral decomposition, we conclude that
$$
(\mathcal{H}+1)^{\frac{\tau}{2}} H_{\alpha} = (\lambda_{\alpha}+1)^{\frac{\tau}{2}}H_{\alpha},\,\, \lambda_{\alpha}=\sum^3_{j=1}(\alpha_j+\frac{1}{2}),\,\alpha\in\mathbb{N}^3.
$$
This ends the proof of the another definition of the Shubin space.
\end{proof}

Remark that for $\tau>0$,
$$
Q^{\tau}(\mathbb{R}^3)\subsetneq H^{\tau}(\mathbb{R}^3)
$$
where $H^{\tau}(\mathbb{R}^3)$ is the usuel Sobolev space. In fact,
$$
\mathcal{H} f\in L^2(\mathbb{R}^3)\,\,\Rightarrow\,\, \triangle f,\,  |v|^2 f\,  \in L^2(\mathbb{R}^3).
$$
So that for the negative index, we have by the duality \eqref{dual},
$$
H^{-\tau}(\mathbb{R}^3) \subsetneq Q^{-\tau}(\mathbb{R}^3).
$$

\bigskip
\noindent {\bf Acknowledgements.}
The first author is supported by the Natural Science Foundation of China under Grant No.11626235.

\end{document}